\numberwithin{equation}{section}
\theoremstyle{definition}
\newtheorem{Def}{Definition}[section]
\newtheorem{Rem}{Remark}[section]
\newenvironment{Exa}
  {\pushQED{\qed}\Exax}
  {\popQED\endExax}
\theoremstyle{plain}
\newtheorem{The}{Theorem}[section]
\newtheorem{Pro}{Proposition}[section]
\newtheorem{Lem}{Lemma}[section]
\DeclareMathOperator{\supp}{supp}
\newcommand{\bb}[1]{\mathbb{#1}}
\newcommand{\norm}[1]{\left\|{#1}\right\|}
\newcommand{\abs}[1]{\left|{#1}\right|}
\newcommand{\scalarp}[1]{\left\langle{#1}\right\rangle}
\newcommand{\Appref}[1]{Appendix \ref{#1}}
\newcommand{\Lemref}[1]{Lemma \ref{#1}}
\newcommand{\Secref}[1]{Section \ref{#1}}
\newcommand{\Defref}[1]{Definition \ref{#1}}
\newcommand{\Theref}[1]{Theorem \ref{#1}}
\newcommand{\Proref}[1]{Proposition \ref{#1}}
\newcommand{\Pageref}[1]{page \pageref{#1}}
\newcommand{\Defenuref}[2]{Definition \ref{#1}\eqref{#2}}
\newcommand{\Theenuref}[2]{Theorem \ref{#1}\eqref{#2}}
\newcommand{\Proenuref}[2]{Proposition \ref{#1}\eqref{#2}}
\begin{document}

\title[A Characterization of Sparse Nonstationary Gabor Expansions]{A Characterization of Sparse Nonstationary Gabor Expansions}

\author{Emil Solsb{\ae}k Ottosen}
\address{Department of Mathematical Sciences, Aalborg University, 9220 Aalborg, Denmark}
\email{emilo@math.aau.dk}

\author{Morten Nielsen}
\address{Department of Mathematical Sciences, Aalborg University, 9220 Aalborg, Denmark}
\email{mnielsen@math.aau.dk}

\keywords{Time-frequency analysis, nonstationary Gabor frames, decomposition spaces, Banach frames, nonlinear approximation.}

\subjclass[2010]{42B35, 42C15, 41A17.}

\thanks{Supported by the Danish Council for Independent Research | Natural
Sciences, grant 12-124675, "Mathematical and Statistical Analysis of
Spatial Data”.}

\begin{abstract}
We investigate the problem of constructing sparse time-frequency representations with flexible frequency resolution, studying the theory of nonstationary Gabor frames in the framework of decomposition spaces. Given a painless nonstationary Gabor frame, we construct a compatible decomposition space and prove that the nonstationary Gabor frame forms a Banach frame for the decomposition space. Furthermore, we show that the decomposition space norm can be completely characterized by a sparseness condition on the frame coefficients and we prove an upper bound on the approximation error occurring when thresholding the frame coefficients for signals belonging to the decomposition space. 
\end{abstract}

\maketitle

\section{Introduction}
Redundant Gabor frames play an essential role in time-frequency analysis as these frames provide expansions with good time-frequency resolution \cite{Christensen2003,Grochenig2000}. Gabor frames are based on translation and modulation of a single window function according to lattice parameters which largely determine the redundancy of the frame. By varying the support of the window function one can change the overall resolution of the frame, but it is in general not possible to change the resolution in specific regions of the time-frequency plane. For signals with varying time-frequency characteristics, a fixed resolution is often undesirable. To overcome this problem, the usage of \emph{multi-window Gabor frames} has been proposed \cite{ZibulskiZeevi1997,Dorfler2011, WolfeGodsillDorfler2001,JailletTorresani2007}. As opposed to standard Gabor frames, multi-window Gabor frames use a whole catalogue of window functions of different shapes and sizes to create adaptive representations. A recent example is the \emph{nonstationary Gabor frames} (NSGF's) which have shown great potential in capturing the essential time-frequency information of music signals \cite{NuHAGnsgf2011, NuHAGBeat2011,NuHAG2013,NuHAGconstantQ2011}. These frames use different window functions along \emph{either} the time- or the frequency axes and guarantee perfect reconstruction and an FFT-based implementation in the \emph{painless case}. Originally, NSGF's were studied by Hern{\'a}ndez, Labate \& Weiss \cite{Hernandez2002} and later by Ron \& Shen \cite{Ron2005} who named them \emph{generalized shift-invariant systems}. We choose to work with the terminology introduced in \cite{NuHAGnsgf2011} as we will only consider frames in the painless case for which several practical implementations have been constructed under the name of NSGF's \cite{NuHAGnsgf2011,NuHAG2013,NuHAGconstantQ2011}. We consider painless NSGF's with flexible frequency resolution, corresponding to a sampling grid in the time-frequency plane which is irregular over frequency but regular over time at each fixed frequency position. This construction is particularly useful in connection with music signals since the NSGF can be set to coincide with the semitones used in Western music. Based on the nature of musical tones \cite{PielemeierWakefieldSimoni1996,Dorfler2001}, we expect music signals to permit sparse expansions relative to the redundant NSGF dictionaries. 

The main contribution of this paper is a theoretical characterization of the signals with sparse expansions relative to the NSGF dictionaries. By a sparse expansion we mean an expansion for which the original signal can be approximated at a certain rate by thresholding the expansion coefficients. To prove such a characterization, we follow the approach in \cite{GrochenigSamarah2000, GribonvalNielsen2004, GribonvalNielsen2001} and search for a smoothness space compatible with the structure of the frame. Classical smoothness spaces such as modulation spaces \cite{Feichtinger2003} or Besov spaces \cite{Triebel2010} cannot be expected to be linked with sparse expansions relative to the NSGF dictionaries since these smoothness spaces are not compatible with the flexible frequency resolution of the NSGF's. Modulation spaces correspond to a uniform partition of the frequency domain while Besov spaces correspond to a dyadic partition. Therefore, we study NSGF's in the framework of decomposition spaces. Decomposition spaces were introduced by Feichtinger \& Gröbner in \cite{FeichtingerGrobner1985}, and further studied by Feichtinger in \cite{Feichtinger1987}, and form a large class of function spaces on $\bb{R}^d$ including smoothness spaces such as modulation spaces, Besov spaces, and the intermediate $\alpha-$modulation spaces as special cases \cite{BorupNielsen2007,BorupNielsen2006,Grobner1992}. We construct the decomposition spaces using structured coverings, as introduced by Borup \& Nielsen in \cite{BorupNielsen2007}, which leads to a partition of the frequency domain obtained by applying invertible affine transformations $\{A_k(\cdot)+c_k\}_{k\in \bb{N}}$ on a fixed set $Q\subset \bb{R}^d$. 

Given a painless NSGF, we provide a method for constructing a compatible structured covering and the associated decomposition space. We then show that the NSGF forms a Banach frame for the decomposition space and prove that signals belong to the decomposition space if and only if they permit sparse frame expansions. Based on the sparse expansions, we prove an upper bound on the approximation error occurring when thresholding the frame coefficients for signals belonging to the decomposition space. All these results are based on the characterization given in \Theref{The:MainResult} which is the main contribution of this article. This theorem yields the existence of constants $0<C_1,C_2<\infty$ such that all signals $f$, belonging to the decomposition space $D(\mathcal{Q},L^p,\ell^q_{\omega^s})$, satisfy
\begin{equation*}
C_1\norm{f}_{D(\mathcal{Q},L^p,\ell^q_{\omega^s})}\leq \norm{\left\{\scalarp{f,h^p_{T,n}}\right\}_{T,n}}_{d(\mathcal{Q},\ell^p,\ell^q_{\omega^s})}\leq C_2\norm{f}_{D(\mathcal{Q},L^p,\ell^q_{\omega^s})},
\end{equation*}
with $\{h^p_{T,n}\}_{T,n}$ denoting $L^p-$normalized elements from the NSGF and $d(\mathcal{Q},\ell^p,\ell^q_{\omega^s})$ an associated sequence space. In this way we completely characterize the decomposition space using the frame coefficients from the NSGF.	

The outline of the article is as follows. In \Secref{Sec:2} we define decomposition spaces based on structured coverings and in \Secref{Sec:3} we define NSGF's in the notation of \cite{NuHAGnsgf2011}. We construct the compatible decomposition space in \Secref{Sec:4} and in \Secref{Sec:5} we prove \Theref{The:MainResult}. In \Secref{Sec:6} we show that the NSGF forms a Banach frame for the compatible decomposition space and in \Secref{Sec:7} we provide the link to nonlinear approximation theory.

Let us now introduce some of the notation used throughout this article. We let $\hat{f}(\xi):=\int_{\bb{R}^d}f(x)e^{-2\pi i x\cdot\xi}dx$ denote the Fourier transform with the usual extension to $L^2(\bb{R}^d)$. By $F\asymp G$ we mean that there exist two constants $0<C_1,C_2<\infty$ such that $C_1F\leq G\leq C_2F$. For two (quasi-)normed vector spaces $X$ and $Y$, $X\hookrightarrow Y$ means that $X\subset Y$ and $\norm{f}_Y\leq C\norm{f}_X$ for some constant $C$ and all $f\in X$. We say that a non-empty open set $\Omega'\subset \bb{R}^d$ is \emph{compactly contained} in an open set $\Omega\subset \bb{R}^d$ if $\overline{\Omega'}\subset \Omega$ and $\overline{\Omega'}$ is compact. We denote the matrix norm $\max\{\abs{a_{ij}}\}$ by $\norm{A}_{\ell^\infty(\bb{R}^{d\times d})}$ and we call $\{\xi_i\}_{i\in \mathcal{I}}\subset \bb{R}^d$ a $\delta-$separated set if $\inf_{j,k\in \mathcal{I}, j\neq k}\|\xi_j-\xi_k\|_2=\delta>0$. Finally, by $I_d$ we denote the identity operator on $\bb{R}^d$ and by $\chi_Q$ we denote the indicator function for a set $Q\subset \bb{R}^d$. 
\section{Decomposition Spaces}\label{Sec:2}
In order to construct decomposition spaces, we first need the notion of a structured covering with an associated bounded admissible partitions of unity (BAPU) as defined in \Secref{Sec:StrucCovBAPU}. A BAPU defines a (flexible) partition of the frequency domain corresponding to the structured covering. We use the notation of \cite{BorupNielsen2007} but with slightly modified definitions for both the structured coverings and the BAPU's.
\subsection{Structured covering and BAPU}\label{Sec:StrucCovBAPU}
For an invertible matrix $A\in GL(\bb{R}^d)$, and a constant $c\in \bb{R}^d$, we define the affine transformation
\begin{equation*}
T\xi:=A\xi+c,\quad \xi \in \bb{R}^d.
\end{equation*} 
For a subset $Q\subset \bb{R}^d$ we let $Q_T:=T(Q)$, and for notational convenience we define $|T|:=|\det(A)|$. Given a family $\mathcal{T}=\{A_k(\cdot)+c_k\}_{k\in \bb{N}}$ of invertible affine transformations on $\bb{R}^d$, and a subset $Q\subset \bb{R}^d$, we set $\mathcal{Q}:=\{Q_T\}_{T\in \mathcal{T}}$ and
\begin{equation}\label{eq:neighbouringindices}
\widetilde{T}:=\left\{T'\in \mathcal{T}~\big|~ Q_{T'}\cap Q_T \neq \emptyset\right\},\quad T\in \mathcal{T}.
\end{equation}
We say that $\mathcal{Q}$ is an \emph{admissible covering} of $\bb{R}^d$ if $\bigcup_{T\in \mathcal{T}}Q_T=\bb{R}^d$ and there exists $n_0\in \bb{N}$ such that $|\widetilde{T}|\leq n_0$ for all $T\in \mathcal{T}$. We note that the (minimal) number $n_0$ is the degree of overlap between the sets constituting the covering. 
\begin{Def}[$\mathcal{Q}-$moderate weight]\label{Def:moderateweight}
Let $\mathcal{Q}:=\{Q_T\}_{T\in \mathcal{T}}$ be an admissible covering. A function $u:\bb{R}^d\rightarrow (0,\infty)$ is called $\mathcal{Q}-$moderate if there exists $C>0$ such that $u(x)\leq Cu(y)$ for all $x,y\in Q_T$ and all $T\in \mathcal{T}$. A $\mathcal{Q}-$moderate weight (derived from $u$) is a sequence $\{\omega_T\}_{T\in \mathcal{T}}:=\{u(\xi_T)\}_{T\in \mathcal{T}}$ with $\xi_T\in Q_T$ for all $T\in \mathcal{T}$.
\end{Def}
For the rest of this article we shall use the explicit choice $u(\xi):=1+\|\xi\|_2$ for the function $u$ in \Defref{Def:moderateweight}. We now define the concept of a structured covering, first considered in \cite{BorupNielsen2007}. To ensure that the resulting decomposition spaces are complete, we consider an extended version of the definition given in \cite{BorupNielsen2007}. 
\begin{Def}[Structured covering]\label{Def:SAC}
Given a family $\mathcal{T}=\{A_k(\cdot)+c_k\}_{k\in \bb{N}}$ of invertible affine transformations on $\bb{R}^d$, suppose there exist two bounded open sets $P\subset Q\subset \bb{R}^d$, with $P$ compactly contained in $Q$, such that
\begin{enumerate}
\item $\left\{P_T\right\}_{T\in \mathcal{T}}$ and $\left\{Q_T\right\}_{T\in \mathcal{T}}$ are admissible coverings.\label{Def:SAC1}

\item There exists $K>0$, such that $\norm{A_{k'}^{-1}A_k}_{\ell^\infty(\bb{R}^{d\times d})}\leq K$ holds whenever $(A_{k'}Q+c_{k'})\cap (A_{k}Q+c_k)\neq \emptyset$.\label{Def:SAC2}

\item There exists $K_*>0$, such that $\norm{A_{k}^{-1}}_{\ell^\infty(\bb{R}^{d\times d})}\leq K_*$ holds for all $k\in \bb{N}$.\label{Def:SAC3}

\item There exists a $\delta-$separated set $\{\xi_T\}_{T\in \mathcal{T}}\subset \bb{R}^d$, with $\xi_T\in Q_T$ for all $T\in \mathcal{T}$, such that $\{\omega_T\}_{T\in \mathcal{T}}:=\{u(\xi_T)\}_{T\in \mathcal{T}}$ is a $\mathcal{Q}-$moderate weight.\label{Def:SAC4}

\item There exists $\gamma>0$, such that $|Q_T|\leq \omega_T^\gamma$ for all $T\in \mathcal{T}$. \label{Def:SAC5}
\end{enumerate}
Then we call $\mathcal{Q}=\{Q_T\}_{T\in \mathcal{T}}$ a \emph{structured  covering}.
\end{Def}
\begin{Rem}
\Defref{Def:SAC}\eqref{Def:SAC3}-\eqref{Def:SAC5} are new additions compared to the definition given in \cite{BorupNielsen2007} and are necessary for proving \Theref{THE:DECOMPOSITIONTHEOREM} \Pageref{THE:DECOMPOSITIONTHEOREM}. We note that \Defenuref{Def:SAC}{Def:SAC2} implies $|Q_{T'}|\asymp |Q_{T}|$ uniformly for all $T\in \mathcal{T}$ and all $T'\in \widetilde{T}$, and \Defenuref{Def:SAC}{Def:SAC3} implies a uniform lower bound on $|Q_T|$. 
\end{Rem}
For a structured covering we have the associated concept of a BAPU, first considered in \cite{FeichtingerGrobner1985,BorupNielsen2007}. With a small modification of the proof of \cite[Proposition 1]{BorupNielsen2007} we have the following result. 
\begin{Pro}\label{PRO:BAPU}
Given a structured covering $\mathcal{Q}=\{Q_T\}_{T\in \mathcal{T}}$, there exists a family of non-negative functions $\{\psi_T\}_{T\in \mathcal{T}}\subset C^\infty_c(\bb{R}^d)$ satisfying 
\begin{enumerate}
\item $\supp(\psi_T)\subset Q_T$ for all $T\in \mathcal{T}$.\label{PRO:BAPU1}
\item $\displaystyle \sum_{T \in \mathcal{T}}\psi_T(\xi)=1$ for all $\xi\in \bb{R}^d$.\label{PRO:BAPU2}
\item $\displaystyle\sup_{T\in \mathcal{T}}\abs{Q_T}^{1/p-1}\norm{\mathcal{F}^{-1}\psi_T}_{L^p}<\infty$ for all $0<p\leq 1$.\label{PRO:BAPU3}
\item For all $\alpha \in \bb{N}_0^d$, there exists $C_\alpha>0$ such that $\abs{\partial^{\alpha}\psi_T(\xi)}\leq C_\alpha \chi_{Q_T}(\xi)$, for all $\xi\in \bb{R}^d$ and all $T\in \mathcal{T}$.\label{PRO:BAPU4}
\end{enumerate}
We say that $\{\psi_T\}_{T\in \mathcal{T}}$ is a BAPU subordinate to $\mathcal{Q}$.
\end{Pro}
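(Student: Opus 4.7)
The plan is to adapt the classical Feichtinger--Gröbner / Borup--Nielsen construction of a BAPU by a smooth partition-of-unity argument, with the new conditions \Defenuref{Def:SAC}{Def:SAC2}--\Defenuref{Def:SAC}{Def:SAC3} doing the work of making all estimates uniform in $T$. First I would pick a fixed non-negative $\Phi\in C^\infty_c(\bb{R}^d)$ with $\supp(\Phi)\subset Q$ and $\Phi\equiv 1$ on $P$, which exists because $P$ is compactly contained in $Q$. For $T\in \mathcal{T}$ with $T\xi=A_k\xi+c_k$ I define $\Phi_T(\xi):=\Phi(A_k^{-1}(\xi-c_k))$, so that $\supp(\Phi_T)\subset Q_T$ and $\Phi_T\equiv 1$ on $P_T$. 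Since $\{P_T\}_{T\in\mathcal{T}}$ is an admissible covering of $\bb{R}^d$ by \Defenuref{Def:SAC}{Def:SAC1}, the function $\Sigma:=\sum_{T\in\mathcal{T}}\Phi_T$ is a locally finite, smooth sum with $1\leq \Sigma(\xi)\leq n_0$. Setting $\psi_T:=\Phi_T/\Sigma$ gives \eqref{PRO:BAPU1} and \eqref{PRO:BAPU2} immediately.

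For \eqref{PRO:BAPU4} I would pull everything back to the ``mother box'' $Q$ and work with $\tilde{\psi}_T(\eta):=\psi_T(A_k\eta+c_k)$. A direct expansion gives
\begin{equation*}
\tilde{\psi}_T(\eta)=\frac{\Phi(\eta)}{\sum_{T'\in\widetilde{T}}\Phi(B_{T,T'}\eta+d_{T,T'})},
\end{equation*}
where $B_{T,T'}:=A_{k'}^{-1}A_k$ and $d_{T,T'}:=A_{k'}^{-1}(c_k-c_{k'})$. By \Defenuref{Def:SAC}{Def:SAC2} each $B_{T,T'}$ has $\ell^\infty$-norm at most $K$; the vectors $d_{T,T'}$ are bounded because any point of $Q_{T'}\cap Q_T$ can be written in both frames, yielding a bound depending only on $K$ and the diameter of $Q$; and the sum contains at most $n_0$ terms. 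Hence $\{\tilde{\psi}_T\}_{T\in\mathcal{T}}$ is a bounded family in $C_c^\infty$ with a common compact support $\overline{Q}$ and uniformly bounded derivatives of every order. Pulling this back via $\psi_T(\xi)=\tilde{\psi}_T(A_k^{-1}(\xi-c_k))$ and applying the chain rule gives $|\partial^\alpha\psi_T(\xi)|\leq C'_\alpha \|A_k^{-1}\|_{\ell^\infty}^{|\alpha|}\chi_{Q_T}(\xi)$, and \Defenuref{Def:SAC}{Def:SAC3} turns this into the uniform bound $C_\alpha\chi_{Q_T}(\xi)$ required by \eqref{PRO:BAPU4}.

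For \eqref{PRO:BAPU3} I would use the standard Fourier scaling identity: a change of variables shows
\begin{equation*}
\norm{\mathcal{F}^{-1}\psi_T}_{L^p}=\abs{\det A_k}^{1-1/p}\norm{\mathcal{F}^{-1}\tilde{\psi}_T}_{L^p},
\end{equation*}
and since $|Q_T|=|\det A_k|\,|Q|$ this rearranges to $|Q_T|^{1/p-1}\norm{\mathcal{F}^{-1}\psi_T}_{L^p}=|Q|^{1/p-1}\norm{\mathcal{F}^{-1}\tilde{\psi}_T}_{L^p}$. It therefore suffices to bound $\norm{\mathcal{F}^{-1}\tilde{\psi}_T}_{L^p}$ uniformly in $T$. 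Since $\tilde{\psi}_T$ is supported in the fixed compact set $\overline{Q}$ with derivatives of all orders bounded uniformly in $T$ (by the previous step), integration by parts gives, for any $N\in \bb{N}$, a uniform estimate of the form $|\mathcal{F}^{-1}\tilde{\psi}_T(x)|\leq C_N(1+\|x\|_2)^{-N}$. Choosing $N>d/p$ yields the uniform $L^p$ bound, completing \eqref{PRO:BAPU3}.

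I expect \eqref{PRO:BAPU3} to be the main obstacle, since for $0<p\leq 1$ it is a genuine quasi-Banach statement rather than a soft derivative bound: it hinges on the scaling identity and on verifying that the family $\{\tilde{\psi}_T\}_{T\in\mathcal{T}}$ really has derivatives controlled independently of $T$. That uniform control is exactly what the two new conditions \Defenuref{Def:SAC}{Def:SAC2} (bounded relative affine changes between overlapping boxes) and \Defenuref{Def:SAC}{Def:SAC3} (uniform bound on $\|A_k^{-1}\|_{\ell^\infty}$) are designed to provide, and this is precisely where the proof in \cite{BorupNielsen2007} needs the announced small modification.
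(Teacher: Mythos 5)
Your proposal is correct and follows essentially the same route as the paper, which constructs the BAPU exactly as you do --- choosing $\Phi\in C_c^\infty$ with $\Phi\equiv 1$ on $P$, $\supp(\Phi)\subset Q$, and setting $\psi_T(\xi)=\Phi(T^{-1}\xi)/\sum_{T'}\Phi(T'^{-1}\xi)$ --- and then defers to a ``small modification'' of the proof of \cite[Proposition 1]{BorupNielsen2007}. The details you supply (pulling back to the mother set $Q$, using \Defenuref{Def:SAC}{Def:SAC2} to control $B_{T,T'}$ and $d_{T,T'}$, using \Defenuref{Def:SAC}{Def:SAC3} for the chain-rule bound in \eqref{PRO:BAPU4}, and the dilation identity plus decay estimate for \eqref{PRO:BAPU3}) are precisely the modifications the paper alludes to in its remarks.
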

\begin{Rem}
\Proenuref{PRO:BAPU}{PRO:BAPU3} is necessary to ensure that the decomposition spaces under consideration will be well-defined for $0<p<1$. This case is of specific interest since it plays an essential role in connection with nonlinear approximation theory (cf. \Secref{Sec:7}).
\end{Rem}
\begin{Rem}
\Proenuref{PRO:BAPU}{PRO:BAPU4} is a new addition compared to \cite[Proposition 1]{BorupNielsen2007} and is necessary for proving \Theref{THE:DECOMPOSITIONTHEOREM} \Pageref{THE:DECOMPOSITIONTHEOREM}. The proof of \Proenuref{PRO:BAPU}{PRO:BAPU4} follows easily from the arguments in the proof of \cite[Proposition 1]{BorupNielsen2007} and \Defenuref{Def:SAC}{Def:SAC3}. Finally, it should be noted that the assumptions in \Defref{Def:SAC}\eqref{Def:SAC4}-\eqref{Def:SAC5} are not necessary for proving \Proref{PRO:BAPU}, however, these assumptions are needed for the proof of \Theref{THE:DECOMPOSITIONTHEOREM}.
\end{Rem}
The proof of \cite[Proposition 1]{BorupNielsen2007} is constructive and provides a method for constructing the associated BAPU. Given a structured covering $\{Q_T\}_{T\in \mathcal{T}}$ (with $P$ being compactly contained in $Q$), the method goes as follows:
\begin{enumerate}
\item Pick a non-negative function $\Phi\in C_c^\infty(\bb{R}^d)$ with $\Phi(\xi)= 1$ for all $\xi\in P$ and $\supp(\Phi)\subset Q$.

\item For all $T\in \mathcal{T}$, define
\begin{equation*}
\psi_T(\xi)=\frac{\Phi(T^{-1}\xi)}{\sum_{T'\in \mathcal{T}}\Phi(T'^{-1}\xi)}.
\end{equation*}

\item Then $\{\psi_T\}_{T\in \mathcal{T}}$ is a BAPU subordinate to $\mathcal{Q}=\{Q_T\}_{T\in \mathcal{T}}$.
\end{enumerate}
In the next section we define decomposition spaces based on structured coverings.
\subsection{Definition of decomposition spaces}
Given a structured covering $\mathcal{Q}=\{Q_T\}_{T\in \mathcal{T}}$ with corresponding $\mathcal{Q}-$moderate weight $\{\omega_T\}_{T\in \mathcal{T}}$ and BAPU $\{\psi_T\}_{T\in \mathcal{T}}$. For $s\in \bb{R}$ and $0<q\leq \infty$, we define the associated weighted sequence space $\ell^q_{\omega^s}(\mathcal{T})$ as the sequences of complex numbers $\{a_T\}_{T\in \mathcal{T}}$ satisfying 
\begin{equation*}
\norm{\{a_T\}_{T\in \mathcal{T}}}_{\ell^q_{\omega^s}}:=\norm{\{\omega_T^sa_T\}_{T\in \mathcal{T}}}_{\ell^q}<\infty.
\end{equation*}
Given $\{a_T\}_{T\in \mathcal{T}}\in \ell^q_{\omega^s}(\mathcal{T})$, we define $\{a_T^+\}_{T\in \mathcal{T}}$ by $a_T^+:=\sum_{T'\in \widetilde{T}}a_{T'}$. Since $\{\omega_T\}_{T\in \mathcal{T}}$ is $\mathcal{Q}-$moderate, $\{a_T\}_{T\in \mathcal{T}}\rightarrow \{a_T^+\}_{T\in \mathcal{T}}$ defines a bounded operator on $\ell^q_{\omega^s}(\mathcal{T})$ \cite[Remark 2.13 and Lemma 3.2]{FeichtingerGrobner1985}. Denoting its operator norm by $C_+$, we have 
\begin{equation}\label{eq:Operatornorm}
\norm{\left\{a_T^+\right\}_{T\in \mathcal{T}}}_{\ell^q_{\omega^s}}\leq C_+ \norm{\left\{a_T\right\}_{T\in \mathcal{T}}}_{\ell^q_{\omega^s}},\quad \forall \left\{a_T\right\}_{T\in \mathcal{T}}\in \ell^q_{\omega^s}(\mathcal{T}).
\end{equation}
We will use \eqref{eq:Operatornorm} several times throughout this article. Using the notation of \cite{BorupNielsen2007} we define the Fourier multiplier $\psi_T(D)$ by
\begin{equation*}
\psi_T(D)f:=\mathcal{F}^{-1}(\psi_T \mathcal{F}f),\quad f\in L^2(\bb{R}^d).
\end{equation*}
Combining \Proenuref{PRO:BAPU}{PRO:BAPU3} with \Lemref{Lem:BOproof2} \Pageref{Lem:BOproof2} and \cite[Lemma 1]{BorupNielsen2007} we can show the existence of a uniform constant $C>0$ such that all band-limited functions $f\in L^p(\bb{R}^d)$ satisfy
\begin{equation*}
\norm{\psi_T(D)f}_{L^p}\leq C\norm{f}_{L^p},
\end{equation*}
for all $T\in \mathcal{T}$ and all $0<p\leq\infty$. That is, $\psi_T(D)$ extends to a bounded operator on the band-limited functions in $L^p(\bb{R}^d)$, uniformly in $T\in \mathcal{T}$. Let us now give the definition of decomposition spaces on the Fourier side.
\begin{Def}[Decomposition space]\label{Def:DS}
Let $\mathcal{Q}=\{Q_T\}_{T\in \mathcal{T}}$ be a structured covering of $\bb{R}^d$ with corresponding $\mathcal{Q}-$moderate weight $\{\omega_T\}_{T\in \mathcal{T}}$ and subordinate BAPU $\{\psi_T\}_{T\in \mathcal{T}}$. For $s\in \bb{R}$ and $0<p,q<\infty$, we define the \emph{decomposition space} $D(\mathcal{Q},L^p,\ell^q_{\omega^s})$ as the set of distributions $f\in \mathcal{S}'(\bb{R}^d)$ satisfying
\begin{equation*}
\norm{f}_{D(\mathcal{Q},L^p,\ell^q_{\omega^s})}:=\norm{\left\{\norm{\psi_T(D)f}_{L^p}\right\}_{T\in \mathcal{T}}}_{\ell^q_{\omega^s}}<\infty.
\end{equation*}	
\end{Def}
\begin{Rem}
According to \cite[Theorem 3.7]{FeichtingerGrobner1985}, two different BAPU's yield the same decomposition space with equivalent norms so $D(\mathcal{Q},L^p,\ell^q_{\omega^s})$ is in fact well defined and independent of the BAPU. Actually, the results in \cite{FeichtingerGrobner1985} show that decomposition spaces are invariant under certain geometric modifications of the covering $\mathcal{Q}$, but we will not go into detail here. 
\end{Rem}
\begin{Rem}
In their most general form, decomposition spaces $D(\mathcal{Q},B,Y)$ are constructed using a \emph{local component} $B$ and a \emph{global component} $Y$ \cite{FeichtingerGrobner1985}. This construction is similar to the construction of Wiener amalgam spaces $W(B,C)$ with local component $B$ and global component $C$\cite{Heil2003, Rauhut2007,Feichtinger1983}. However, Wiener amalgam spaces are based on bounded \emph{uniform} partitions of unity, which corresponds to a uniform upper bound on the size of the members of the covering. We do not find such an assumption natural in relation to NSGF's (cf. \Secref{Sec:3}) and have therefore chosen the more general framework of decomposition spaces.
\end{Rem}
In \Theref{THE:DECOMPOSITIONTHEOREM} below we prove that $D(\mathcal{Q},L^p,\ell^q_{\omega^s})$ is in fact a (quasi-)Banach space. Before presenting this result, let us first consider some examples of familiar decomposition spaces. By standard arguments, one can easily show that $D(\mathcal{Q},L^2,\ell^2)=L^2(\bb{R}^d)$ with equivalent norms for any structured covering $\mathcal{Q}$. The next two examples are not as straightforward and demand some structure on the covering. Recall that $\{\xi_T\}_{T\in \mathcal{T}}$ denotes the $\delta-$separated set from \Defref{Def:SAC}\eqref{Def:SAC4}.
\begin{Exa}[Modulation spaces]\label{Exa:Modulation}
Let $Q\subset \bb{R}^d$ be an open cube with center $0$ and side length $r>1$. Define $\mathcal{T}:=\{T_k\}_{k\in \bb{Z}^d}$, with $T_k\xi:=\xi-k$, and set $\xi_{T_k}:=k$ for all $k\in \bb{Z}^d$. With $\mathcal{Q}:=\{Q_T\}_{T\in \mathcal{T}}$ then $D(\mathcal{Q},L^p,\ell^q_{\omega^s})=M^s_{p,q}(\bb{R}^d)$ for $s\in \bb{R}$ and $0<p,q<\infty$, see \cite[Section 4]{Feichtinger2003} for further details.
\end{Exa}
\begin{Exa}[Besov spaces]\label{Exa:Besov}
Let $E_2:=\{\pm 1, \pm 2\}$, $E_1:=\{\pm 1\}$ and $E:=E_2^d\setminus E_1^d$. For $j\in \bb{N}$ and $k\in E$ define $c_{j,k}:=2^j(v(k_1),\ldots,v(k_d))$, where
\begin{equation*}
v(x):=\text{sgn}(x)\cdot  \left\{
     \begin{array}{lr}
       1/2 &  \text{for }x=\pm 1\\
       3/2 &  \text{for }x=\pm 2
     \end{array}
   \right.
\end{equation*}
Let $Q\subset \bb{R}^d$ be an open cube with center $0$ and side length $r>2$. Define $\mathcal{T}:=\{I,T_{j,k}\}_{j\in \bb{N}, k\in E}$, with $T_{j,k}\xi:=2^j\xi+c_{j,k}$, and set $\xi_{T_{j,k}}:=c_{j,k}$ for all $j\in \bb{N}$ and $k\in E$. With $\mathcal{Q}:=\{Q_T\}_{T\in \mathcal{T}}$ then $D(\mathcal{Q},L^p,\ell^q_{\omega^s})=B^s_{p,q}(\bb{R}^d)$ for $s\in \bb{R}$ and $0<p,q<\infty$, see \cite[Section 2.5.4]{Triebel2010} for further details.
\end{Exa}
Let us now study some important properties of decomposition spaces, in particular completeness. 
\begin{The}\label{THE:DECOMPOSITIONTHEOREM}
Given a structured covering $\mathcal{Q}=\{Q_T\}_{T\in \mathcal{T}}$ with $\mathcal{Q}-$moderate weight $\{\omega_T\}_{T\in \mathcal{T}}$ and subordinate BAPU $\{\psi_T\}_{T\in \mathcal{T}}$. For $s\in \bb{R}$ and $0<p,q<\infty$,
\begin{enumerate}
\item $\mathcal{S}(\bb{R}^d)\hookrightarrow D(\mathcal{Q},L^p,\ell^q_{\omega^s}) \hookrightarrow \mathcal{S}'(\bb{R}^d)$.\label{THE:DECOMPOSITIONTHEOREM1}

\item $D(\mathcal{Q},L^p,\ell^q_{\omega^s})$ is a quasi-Banach space (Banach space if $1\leq p,q<\infty$).\label{THE:DECOMPOSITIONTHEOREM2}

\item $\mathcal{S}(\bb{R}^d)$ is dense in $D(\mathcal{Q},L^p,\ell^q_{\omega^s})$.\label{THE:DECOMPOSITIONTHEOREM3}
\end{enumerate}
\end{The}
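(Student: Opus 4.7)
The plan is to prove the three parts in the order (1), (2), (3), since each uses the previous. The two embeddings in (1) give the ambient topology in which Cauchy sequences from (2) converge, and the completeness from (2) together with a norm-convergence approximation scheme is what drives (3).

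For the embedding $\mathcal{S}(\mathbb{R}^d)\hookrightarrow D(\mathcal{Q},L^p,\ell^q_{\omega^s})$, I would fix $f\in\mathcal{S}$ and estimate $\|\psi_T(D)f\|_{L^p}$ in two stages. Since $\widehat{f}$ is Schwartz, for any $N$ one has $|\widehat{f}(\xi)|\le C_N (1+\|\xi\|_2)^{-N}$, and by \Proenuref{PRO:BAPU}{PRO:BAPU4} together with standard Young/Nikolskii-type bounds for band-limited functions (and \Proenuref{PRO:BAPU}{PRO:BAPU3} in the range $0<p<1$) one obtains $\|\psi_T(D)f\|_{L^p}\le C_N'\,\omega_T^{-N}$, where $C_N'$ depends on finitely many Schwartz seminorms of $f$. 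The $\delta$-separation in \Defenuref{Def:SAC}{Def:SAC4} yields a polynomial counting estimate for $\{T : \omega_T\le R\}$, so choosing $N$ large enough makes the weighted $\ell^q$ sum finite, with a bound by a Schwartz seminorm of $f$. For $D(\mathcal{Q},L^p,\ell^q_{\omega^s})\hookrightarrow\mathcal{S}'(\mathbb{R}^d)$, I would introduce an enlarged partition $\widetilde{\psi}_T:=\sum_{T'\in\widetilde{T}}\psi_{T'}$, which equals $1$ on $\mathrm{supp}\,\psi_T$, so that for $\phi\in\mathcal{S}$ one can pair $\langle f,\phi\rangle=\sum_T\langle \psi_T(D)f,\widetilde{\psi}_T(D)\phi\rangle$; Hölder in both $x$ and $T$ (with the weights $\omega_T^s$ and their reciprocals absorbed by the rapid decay of $\widetilde{\psi}_T(D)\phi$ shown as above) bounds $|\langle f,\phi\rangle|$ by $\|f\|_{D}$ times a Schwartz seminorm of $\phi$.

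Part (2) then follows by a standard Fatou argument. Let $\{f_n\}$ be Cauchy in $D$; by part (1) it is Cauchy in $\mathcal{S}'$, so $f_n\to f$ in $\mathcal{S}'$. Since $\psi_T(D)$ is continuous on $\mathcal{S}'$ and $\{\psi_T(D)f_n\}_n$ is Cauchy in $L^p$ for each fixed $T$, uniqueness of limits forces $\psi_T(D)f_n\to \psi_T(D)f$ in $L^p$. Applying Fatou (in $n\to\infty$ for fixed $m$) to the weighted $\ell^q$ norm of $\{\|\psi_T(D)(f_m-f_n)\|_{L^p}\}_T$ gives $\|f-f_m\|_{D}\to 0$, and the quasi-triangle (triangle if $p,q\ge 1$) inequality produces the (quasi-)Banach structure.

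For part (3), the plan is a two-step approximation: first truncate in frequency, then smooth in space. Given $f\in D$, set $f_F:=\sum_{T\in F}\psi_T(D)f$ for a finite $F\subset\mathcal{T}$. Using $\psi_{T'}(D)(f-f_F)=\sum_{T\in\widetilde{T'}\setminus F}\psi_{T'}(D)\psi_T(D)f$, the uniform $L^p$-boundedness of $\psi_{T'}(D)$ on band-limited $L^p$, and the inequality \eqref{eq:Operatornorm} applied to the $(a_T^+)$ operator, I obtain $\|f-f_F\|_D$ controlled by the tail $\|(\|\psi_T(D)f\|_{L^p})_{T\notin F^+}\|_{\ell^q_{\omega^s}}$, which vanishes as $F$ exhausts $\mathcal{T}$. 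The function $f_F$ is a band-limited $L^p$ function whose Fourier support lies in a fixed compact set meeting only finitely many $Q_{T'}$, so any $L^p$-convergent approximation $g_n\to f_F$ by Schwartz functions automatically converges in $D$. To produce such $g_n$, multiply by a smooth spatial cutoff $\phi(\cdot/R)$ for $R\to\infty$ and then mollify (or simply convolve $f_F$ with a Schwartz approximate identity after spatial truncation), using band-limitedness to stay in a neighborhood of the original Fourier support. The main obstacle I anticipate is making the quantitative frequency-decay estimate underlying the Schwartz embedding uniform over $T$ while juggling the three competing ingredients—the BAPU bound in \Proenuref{PRO:BAPU}{PRO:BAPU3}, the lower bound on $|Q_T|$ from \Defenuref{Def:SAC}{Def:SAC3}, and the polynomial growth of $|Q_T|$ allowed by \Defenuref{Def:SAC}{Def:SAC5}—so that the Schwartz decay truly beats the weight $\omega_T^s$ and the volume factor together; once this uniform estimate is in hand, the remaining steps are structural.
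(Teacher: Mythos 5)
Your proposal is correct and takes essentially the same route as the paper's proof: the embedding $\mathcal{S}(\bb{R}^d)\hookrightarrow D(\mathcal{Q},L^p,\ell^q_{\omega^s})$ via rapid decay beating the weight (using \Proref{PRO:BAPU}\eqref{PRO:BAPU3}-\eqref{PRO:BAPU4} together with the $\delta$-separation from \Defenuref{Def:SAC}{Def:SAC4}), the dual embedding via the pairing $\scalarp{f,\varphi}=\sum_{T}\scalarp{\psi_T(D)f,\widetilde{\psi}_T(D)\varphi}$ with a Nikolskii-type bound and $\abs{Q_T}\leq\omega_T^{\gamma}$ from \Defenuref{Def:SAC}{Def:SAC5}, a standard Fatou argument for completeness, and frequency truncation followed by smoothing for density. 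The only differences are cosmetic: the paper first reduces to $q=\infty$ through the embeddings $D^{s+\varepsilon}_{p,\infty}\hookrightarrow D^{s}_{p,q}\hookrightarrow D^{s}_{p,\infty}$ rather than summing directly with your counting estimate, and for density it truncates with a single smooth cutoff $I\hat{f}$ and mollifies on the Fourier side, where you use finite BAPU partial sums and a spatial cutoff with band-limited multiplier --- both implementations are sound.
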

\begin{Rem}
As was pointed out in \cite{FuhrVoigtlaender2015}, the definition of decomposition spaces given in \cite{BorupNielsen2007} cannot guarantee completeness in the general case. However in \cite{BorupNielsen2008}, this problem was fixed by imposing certain weight conditions on the structured covering. Our proof of \Theref{THE:DECOMPOSITIONTHEOREM} is based on the approach taken in \cite{BorupNielsen2008}. 
\end{Rem}
In \Appref{App:1} we have provided a sketch of the proof for \Theref{THE:DECOMPOSITIONTHEOREM}. The underlying ideas for the proof are similar to those of \cite[Proposition 5.2]{BorupNielsen2008} and several references are made to results in \cite{BorupNielsen2008}. However, in \cite{BorupNielsen2008} the authors considered only coverings made up from open balls and not all arguments carry over to the general case of an arbitrary structured covering.
\section{Nonstationary Gabor Frames}\label{Sec:3}
In this section we define nonstationary Gabor frames with flexible frequency resolution using the notation of \cite{NuHAGnsgf2011}. Given a set of window functions $\{h_m\}_{m\in \bb{Z}^d}$ in $L^2(\bb{R}^d)$, with corresponding time sampling steps $a_m>0$, for $m,n\in \bb{Z}^d$ we define atoms of the form 
\begin{equation*}
h_{m,n}(x):=h_m(x-na_m),\quad x\in \bb{R}^d.
\end{equation*}
The choice of $\bb{Z}^d$ as index set for $m$ is only a matter of notational convenience; any countable index set would do. If $\sum_{m,n}|\langle f,h_{m,n}\rangle|^2\asymp \|f\|_2^2$ for all $f\in L^2(\bb{R}^d)$, we refer to $\{h_{m,n}\}_{m,n}$ as a \emph{nonstationary Gabor frame} (NSGF). For an NSGF $\{h_{m,n}\}_{m,n}$, the frame operator 
\begin{equation*}
Sf=\sum_{m,n\in \bb{Z}^d}\scalarp{f,h_{m,n}}h_{m,n},\quad f\in L^2(\bb{R}^d),
\end{equation*}
is invertible and we have the expansions
\begin{equation*}
f=\sum_{m,n\in \bb{Z}^d}\scalarp{f,h_{m,n}}\tilde{h}_{m,n},\quad  f\in L^2(\bb{R}^d),
\end{equation*}
with $\{\tilde{h}_{m,n}\}_{m,n}:=\{S^{-1}h_{m,n}\}_{m,n}$ being the canonical dual frame of $\{h_{m,n}\}_{m,n}$. An NSGF with flexible frequency resolution corresponds to a grid in the time-frequency plane which is irregular over frequency but regular over time at each frequency position. This property allows for adaptive time-frequency representations as opposed to standard Gabor frames. According to \cite[Corollary 2]{NuHAGnsgf2011}, we have the following important result for NSGF's with band-limited window functions.  
\begin{The}\label{The:NSGFTheorem}
Let $\{h_m\}_{m\in \bb{Z}^d}\subset L^2(\bb{R}^d)$ with time sampling steps $\{a_m\}_{m\in \bb{Z}^d}$, $a_m>0$ for all $m\in \bb{Z}^d$. Assuming $\supp(\hat{h}_m)\subseteq[0,\frac{1}{a_m}]^d+b_m$, with $b_m\in \bb{R}^d$ for all $m\in \bb{Z}^d$, the frame operator for the system
\begin{equation*}
h_{m,n}(x)=h_m(x-na_m),\quad \forall m,n\in \bb{Z}^d,\quad x\in \bb{R}^d,
\end{equation*}
is given by
\begin{equation*}
Sf(x)=\left(\mathcal{F}^{-1}\left(\sum_{m\in \bb{Z}^d}\frac{1}{a_m^d}\abs{\hat{h}_m}^2\right)\ast f\right)(x),\quad f\in L^2(\bb{R}^d).
\end{equation*}
The system $\{h_{m,n}\}_{m,n\in \bb{Z}^d}$ constitutes a frame for $L^2(\bb{R}^d)$, with frame-bounds\\ $0<A\leq B<\infty$, if and only if 
\begin{equation}\label{eq:ConstructDecom1}
A\leq \sum_{m\in \bb{Z}^d}\frac{1}{a_m^d}\abs{\hat{h}_m(\xi)}^2\leq B,\quad \text{for a.e. } \xi\in \bb{R}^d,
\end{equation}
and the canonical dual frame is then given by
\begin{equation}\label{eq:NSGFCanonicalDualFrame}
\tilde{h}_{m,n}(x)=\mathcal{F}^{-1}\left(\frac{\hat{h}_{m}}{\sum_{l\in \bb{Z}^d}\frac{1}{a_l^d}\abs{\hat{h}_l}^2}\right)(x-na_m),\quad x\in \bb{R}^d.
\end{equation}
\end{The}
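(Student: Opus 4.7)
My plan is to exploit the painless support condition to reduce everything to Fourier series on the boxes $B_m := [0, 1/a_m]^d + b_m$. The proof is a direct Fourier-side computation, and since the result is essentially \cite[Corollary 2]{NuHAGnsgf2011}, I would present it as a verification rather than an innovation.

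The first step is to rewrite the inner products as Fourier coefficients. By Parseval, $\scalarp{f,h_{m,n}} = \int \hat{f}(\eta)\overline{\hat{h}_m(\eta)} e^{2\pi i n a_m\cdot \eta}\, d\eta$, and since $\supp(\hat{h}_m)\subseteq B_m$, this integral is actually over $B_m$. On $L^2(B_m)$ the family $\{a_m^{d/2} e^{-2\pi i n a_m\cdot \eta}\}_{n\in \bb{Z}^d}$ is an orthonormal basis, so expanding $\hat{f}\,\overline{\hat{h}_m}$ (restricted to $B_m$) in this basis gives, for $\xi \in B_m$,
\begin{equation*}
\hat{f}(\xi)\overline{\hat{h}_m(\xi)} = a_m^d \sum_{n\in \bb{Z}^d}\scalarp{f,h_{m,n}}\, e^{-2\pi i n a_m\cdot \xi}.
\end{equation*}

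The second step computes $Sf$ on the Fourier side. Since $\widehat{h_{m,n}}(\xi) = e^{-2\pi i n a_m\cdot \xi}\hat{h}_m(\xi)$, multiplying the identity above by $\hat{h}_m(\xi)$ (which vanishes outside $B_m$, so the restriction to $B_m$ is no longer an issue) yields
\begin{equation*}
\sum_{n\in \bb{Z}^d}\scalarp{f,h_{m,n}}\,\widehat{h_{m,n}}(\xi) = \frac{1}{a_m^d}\,\hat{f}(\xi)\,|\hat{h}_m(\xi)|^2.
\end{equation*}
Summing over $m$ (interchange is justified by the local finiteness supplied by the frame bound once it is established, or by an a priori $L^2$ argument on the partial sums) gives $\widehat{Sf}(\xi) = \hat{f}(\xi)\sum_{m} a_m^{-d}|\hat{h}_m(\xi)|^2$, which is exactly the convolution formula claimed.

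From this multiplier representation the remaining assertions are immediate. Applying Parseval at each $m$ yields $\sum_n|\scalarp{f,h_{m,n}}|^2 = a_m^{-d}\int |\hat{f}|^2|\hat{h}_m|^2\, d\eta$, so summing over $m$ shows that $\sum_{m,n}|\scalarp{f,h_{m,n}}|^2$ equals $\int |\hat{f}(\xi)|^2\bigl(\sum_m a_m^{-d}|\hat{h}_m(\xi)|^2\bigr)\,d\xi$; this is sandwiched between $A\|f\|_2^2$ and $B\|f\|_2^2$ for all $f\in L^2$ if and only if \eqref{eq:ConstructDecom1} holds. Finally, under this two-sided bound $S$ is a Fourier multiplier by a bounded, bounded-below, positive function, hence invertible with $\widehat{S^{-1}g}(\xi) = \hat{g}(\xi)/\sum_l a_l^{-d}|\hat{h}_l|^2$; applying this to $h_{m,n}$ (and using $\widehat{h_{m,n}}(\xi) = e^{-2\pi i n a_m\cdot \xi}\hat{h}_m(\xi)$) produces the canonical dual frame formula \eqref{eq:NSGFCanonicalDualFrame}.

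The only genuinely delicate point is justifying the interchange of the sum over $m$ with the Fourier transform when forming $Sf$; the cleanest route is to first prove the frame inequality on a dense subclass (say, band-limited $f$), so that the sum $\sum_{m,n}\scalarp{f,h_{m,n}}h_{m,n}$ converges unconditionally in $L^2$, and then extend by continuity. Everything else is bookkeeping with the orthonormal basis on $B_m$ and the painless support hypothesis.
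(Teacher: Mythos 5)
The paper offers no proof of this theorem at all---it is imported directly from \cite[Corollary 2]{NuHAGnsgf2011}---and your argument correctly reconstructs the standard ``painless'' proof given in that reference: Parseval reduces the coefficients to Fourier coefficients on the boxes $[0,1/a_m]^d+b_m$, orthonormality of the scaled exponentials yields the multiplier identity $\widehat{Sf}=\hat{f}\sum_m a_m^{-d}|\hat{h}_m|^2$, and inverting this bounded, bounded-below multiplier gives the dual frame formula. Your flagged interchange issue is also resolved in the right order: the identity $\sum_{m,n}|\langle f,h_{m,n}\rangle|^2=\int_{\bb{R}^d}|\hat{f}(\xi)|^2\sum_m a_m^{-d}|\hat{h}_m(\xi)|^2\,d\xi$ requires only monotone convergence (all terms being nonnegative), which settles the frame characterization first, and the resulting Bessel bound then justifies the unconditional $L^2$-convergence of the sum defining $S$.
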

\begin{Rem}
We note that the canonical dual frame in \eqref{eq:NSGFCanonicalDualFrame} posses the same structure as the original frame, which is a property not shared by general NSGF's. We also note that the canonical tight frame can be obtained by taking the square root of the denominator in \eqref{eq:NSGFCanonicalDualFrame}.
\end{Rem}
Traditionally, an NSGF satisfying the assumptions of \Theref{The:NSGFTheorem} is called a \emph{painless} NSGF, referring to the fact that the frame operator is simply a multiplication operator (in the frequency domain) and therefore easily invertible. This terminology is adopted from the classical \emph{painless nonorthogonal expansions} \cite{Daubechies86}, which corresponds to the painless case for standard Gabor frames. 

By slight abuse of notation we use the term "painless" to denote the NSGF's satisfying \Defref{Def:PainlessNSGF} below. In order to properly formulate this definition, we first need some preliminary notation. Let $\{h_m\}_{m\in \bb{Z}^d}\subset L^2(\bb{R}^d)$ satisfy the assumptions in \Theref{The:NSGFTheorem}. Given $C_*>0$ we denote by $\{I_m\}_{m\in \bb{Z}^d}$ the open cubes
\begin{equation}\label{eq:NSGFopencubes}
I_m:=\left(-\varepsilon_m,\frac{1}{a_m}+\varepsilon_m\right)^d + b_m,\quad  m\in \bb{Z}^d,
\end{equation}
with $\varepsilon_m:=C_*/a_m$ for all $m\in \bb{Z}^d$. We note that $\supp(\hat{h}_{m,n})\subset I_m$ for all $m,n\in \bb{Z}^d$. For $m\in \bb{Z}^d$ we define
\begin{equation*}
\widetilde{m}:=\left\{m'\in \bb{Z}^d~\big| ~ I_{m'}\cap I_{m}\neq \emptyset\right\},
\end{equation*}
using the notation of \eqref{eq:neighbouringindices}. With this definition, $|\widetilde{m}|$ denotes the number of cubes overlapping with $I_m$. Finally, we recall the choice $u(\xi):=1+\|\xi\|_2$ for the function $u$ in \Defref{Def:moderateweight}.
\begin{Def}[Painless NSGF]\label{Def:PainlessNSGF}
Let $\{h_m\}_{m\in \bb{Z}^d}\subset \mathcal{S}(\bb{R}^d)$ satisfy the assumptions in \Theref{The:NSGFTheorem} and assume that
\begin{enumerate}
\item $\{\hat{h}_m\}_{m\in \bb{Z}^d}\subset C_c^\infty(\bb{R}^d)$ and for $\beta \in \bb{N}_0^d$ there exists $C_\beta>0$, such that\label{Def:PainlessNSGF1}
\begin{equation*}
\sup_{\xi\in \bb{R}^d}\abs{\partial_\xi^\beta\hat{h}_m(\xi)}\leq C_\beta a_m^{d/2+\abs{\beta}},\quad \text{for all }m\in \bb{Z}^d.
\end{equation*} 

\item $\sup_{m\in \bb{Z}^d}a_m:=a<\infty$.\label{Def:PainlessNSGF2}

\item There exists $C_*>0$ and $n_0\in \bb{N}$, such that the open cubes $\{I_m\}_{m\in \bb{Z}^d}$ satisfy $|\widetilde{m}|\leq n_0$ and $a_{m'}\asymp a_m$ uniformly for all $m\in \bb{Z}^d$ and all $m'\in \widetilde{m}$.\label{Def:PainlessNSGF3}

\item The centerpoints $\{b_m\}_{m\in \bb{Z}^d}$ forms a $\delta-$separated set and the sequence $\{\omega_m\}_{m\in \bb{Z}^d}:=\{u(b_m)\}_{m\in \bb{Z}^d}$ constitutes a $\{I_m\}_{m\in \bb{Z}^d}-$moderate weight.\label{Def:PainlessNSGF4}

\item There exists $\gamma>0$ such that $|I_m|\leq \omega_m^\gamma$ for all $m\in \bb{Z}^d$.\label{Def:PainlessNSGF5}
\end{enumerate}
Then we refer to $\{h_{m,n}\}_{m,n\in \bb{Z}^d}$ as a \emph{painless} NSGF.
\end{Def}
\begin{Rem}
\Defenuref{Def:PainlessNSGF}{Def:PainlessNSGF2} implies a uniform lower bound on $|I_m|$ and \Defenuref{Def:PainlessNSGF}{Def:PainlessNSGF4} guarantees a minimum distance between the center of the cubes. Furthermore, \Defenuref{Def:PainlessNSGF}{Def:PainlessNSGF3} implies that each cube $I_m$ has at most $n_0$ overlap with other cubes and that the side-length of $I_m$ is equivalent to the side-length of any overlapping cube. 
\end{Rem}
\begin{Rem}\label{Rem:painlessdiscussion}
The assumptions in \Defref{Def:PainlessNSGF} are natural in relation to decomposition spaces and are easily satisfied. However, the support conditions for $\{\hat{h}_m\}_{m\in \bb{Z}^d}$ given in \Theref{The:NSGFTheorem} are rather restrictive and deserves a discussion. In fact, compact support of the window functions is not a necessary assumption for characterizing modulation spaces \cite{Grochenig2000}, Besov spaces \cite{Frazier1985}, or even general decomposition spaces \cite{Nielsen2012}. However, a certain structure of the dual frame is needed and general NSGF's does not provide such structure. We choose to work with the painless case and base our argument on the fact that the dual frame posses the same structure as the original frame. We expect that it is possible to extend the theory developed in this paper to a more general setting by applying existence results for general NSGF's \cite{Dorfler2014,Dofler2015,Holighaus2014} or generalized shift invariant systems \cite{Hernandez2002,Ron2005,Lemvig2016,GittaKutyniok2006}. In particular, the paper \cite{Holighaus2014} by Holighaus seems to provide interesting results in this regard. In this paper, it is shown that for compactly supported window functions, the sampling density in \Theref{The:NSGFTheorem} can (under mild assumptions) be relaxed such that the dual frame posses a structure similar to that of the original frame. However, it is outside the scope of this paper to include such results and we will not go into further details.
\end{Rem}
We now provide a simple example of a set of window functions satisfying \Defenuref{Def:PainlessNSGF}{Def:PainlessNSGF1}.
\begin{Exa}
Let $\varphi\in C_c^\infty(\bb{R}^d)\setminus \{0\}$ with $\supp(\varphi)\subseteq [0,1]^d$ and for $m\in \bb{Z}^d$ define
\begin{equation*}
\hat{h}_m(\xi):=a_m^{d/2}\varphi\left(a_m(\xi-b_m)\right),\quad \forall \xi \in \bb{R}^d,
\end{equation*}
with $b_m\in \bb{R}^d$ and $a_m>0$. Then $\supp(\hat{h}_m)\subseteq[0,\frac{1}{a_m}]^d+b_m$. Furthermore, with $w:=a_m(\xi-b_m)$ the chain rule yields
\begin{equation*}
\abs{\partial_\xi^\beta\hat{h}_m(\xi)}=\abs{\left[\partial^\beta_\xi \varphi\right](w)}a_m^{d/2+\abs{\beta}}\leq C_\beta a_m^{d/2+\abs{\beta}}\chi_{[0,\frac{1}{a_m}]^d+b_m}(\xi),\quad \forall \xi \in \bb{R}^d.
\end{equation*}
This shows \Defenuref{Def:PainlessNSGF}{Def:PainlessNSGF1}.
\end{Exa}
In the next section we consider painless NSGF's in the framework of decomposition spaces in order to characterize signals with sparse expansions relative to the NSGF dictionaries.
\section{Decomposition Spaces Based on Nonstationary Gabor Frames}\label{Sec:4}
We first provide a method for constructing a structured covering which is compatibly with a given painless NSGF $\{h_{m,n}\}_{m,n\in \bb{Z}^d}\subset \mathcal{S}(\bb{R}^d)$. We recall the definition of $\varepsilon_m=C_*/a_m$ used in the construction of $\{I_m\}_{m\in \bb{Z}^d}$ in \eqref{eq:NSGFopencubes}. Define $Q:=(0,1)^d$ together with the set of affine transformations $\mathcal{T}:=\{A_m(\cdot)+c_m\}_{m\in \bb{Z}^d}$ with
\begin{equation*}
A_m:=\left(2\varepsilon_m+\frac{1}{a_m}\right)\cdot I_d\quad \text{and}\quad (c_m)_j:=-\varepsilon_m+(b_m)_j,\quad 1\leq j \leq d.
\end{equation*}
Then $\mathcal{Q}:=\{Q_T\}_{T\in \mathcal{T}}=\{I_m\}_{m\in \bb{Z}^d}$ and, furthermore, we have the following result.
\begin{Lem}
$\mathcal{Q}$ is a structured covering of $\bb{R}^d$.
\end{Lem}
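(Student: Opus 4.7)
The proof is a verification, item by item, that each clause of Definition \ref{Def:SAC} for a structured covering follows from the corresponding clause of Definition \ref{Def:PainlessNSGF} for a painless NSGF. Because $A_m=((2C_*+1)/a_m)I_d$ is a scalar multiple of the identity, the matrix manipulations reduce to arithmetic with the sampling steps $a_m$.

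To handle clause \eqref{Def:SAC1}, I will choose $P:=(\alpha,1-\alpha)^d$ with $\alpha<C_*/(2C_*+1)$, so that $\overline P\subset Q=(0,1)^d$ is compact. A direct computation with $T\xi=A_m\xi+c_m$ gives $P_T\supset (\varepsilon_m, \varepsilon_m+1/a_m)^d+(b_m-\varepsilon_m\mathbf{1})\supset [0,1/a_m]^d+b_m\supset \supp(\hat h_m)$. Then $\bigcup_TP_T\supset \bigcup_m\supp(\hat h_m)=\bb R^d$, where the last equality is the one delicate step: the lower frame bound $A\leq \sum_m a_m^{-d}\abs{\hat h_m(\xi)}^2$ holds a.e., but Definition \ref{Def:PainlessNSGF}\eqref{Def:PainlessNSGF3} implies that this sum is locally a finite sum of smooth functions, hence continuous; a non-negative continuous function bounded below a.e. by a positive constant is bounded below by that constant everywhere. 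Admissibility of $\{Q_T\}$ follows because $Q_T=I_m$, the overlap bound $|\widetilde{T}|\leq n_0$ being Definition \ref{Def:PainlessNSGF}\eqref{Def:PainlessNSGF3}; admissibility of $\{P_T\}$ is inherited from $\{Q_T\}$ via the inclusion $P_T\subset Q_T$.

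Clauses \eqref{Def:SAC2} and \eqref{Def:SAC3} are then immediate: $A_{m'}^{-1}A_m=(a_{m'}/a_m)I_d$, so $\norm{A_{m'}^{-1}A_m}_{\ell^\infty(\bb R^{d\times d})}=a_{m'}/a_m$, which is bounded by a uniform $K$ whenever $I_{m'}\cap I_m\neq\emptyset$ by Definition \ref{Def:PainlessNSGF}\eqref{Def:PainlessNSGF3}; and $\norm{A_m^{-1}}_{\ell^\infty(\bb R^{d\times d})}=a_m/(2C_*+1)\leq a/(2C_*+1)=:K_*$ by Definition \ref{Def:PainlessNSGF}\eqref{Def:PainlessNSGF2}. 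For clauses \eqref{Def:SAC4} and \eqref{Def:SAC5}, I set $\xi_T:=b_m$, which lies in $Q_T=I_m$ since $0\in(-\varepsilon_m,1/a_m+\varepsilon_m)^d$; Definition \ref{Def:PainlessNSGF}\eqref{Def:PainlessNSGF4} then gives $\delta$-separation of $\{b_m\}$ and $\mathcal{Q}$-moderateness of $\{u(b_m)\}_{m\in\bb Z^d}=\{\omega_T\}_{T\in\mathcal T}$, while the volume bound $|Q_T|\leq\omega_T^\gamma$ is precisely Definition \ref{Def:PainlessNSGF}\eqref{Def:PainlessNSGF5}.

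The only place where any real work occurs is the construction of $P$ and the associated upgrade from the a.e. frame inequality to the exact covering $\bigcup_m\supp(\hat h_m)=\bb R^d$; the remaining clauses are essentially one-line verifications thanks to the isotropy of the matrices $A_m$.
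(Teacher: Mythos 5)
Your proposal is correct and takes essentially the same route as the paper, which chooses the borderline set $P=\left(\frac{C_*}{2C_*+1},\frac{C_*+1}{2C_*+1}\right)^d$ (so that $P_T=(0,\frac{1}{a_m})^d+b_m$) and obtains the covering property by contradiction from the lower frame bound in \eqref{eq:ConstructDecom1} together with continuity of each $\hat{h}_m$, while clauses \eqref{Def:SAC2}--\eqref{Def:SAC5} of \Defref{Def:SAC} are verified by the same one-line computations as yours. Your explicit upgrade of the a.e.\ lower bound to an everywhere bound is in fact slightly more careful than the paper's pointwise contradiction; just note that local finiteness of the sum $\sum_m a_m^{-d}|\hat{h}_m|^2$ requires the uniform lower bound on the cube side lengths coming from \Defenuref{Def:PainlessNSGF}{Def:PainlessNSGF2} (i.e.\ $a_m\leq a$) in addition to the overlap bound of \Defenuref{Def:PainlessNSGF}{Def:PainlessNSGF3}, since an overlap bound alone does not exclude small cubes accumulating at a point.
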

\begin{proof}
Define the set
\begin{equation*}
P:=\left(\frac{C_*}{2C_*+1},\frac{C_*+1}{2C_*+1}\right)^d.
\end{equation*}
By straightforward calculations, it is easy to show that $P$ is compactly contained in $Q$ and $\mathcal{P}:=\{P_T\}_{T\in \mathcal{T}}=\{(0,\frac{1}{a_m})^d+b_m\}_{m\in \bb{Z}^d}$. Let us now show that $\mathcal{P}$ and $\mathcal{Q}$ satisfy the five conditions of \Defref{Def:SAC} \Pageref{Def:SAC}.
\begin{enumerate}
\item First we show that $\mathcal{P}$ covers $\bb{R}^d$. We note that this immediately implies that $\mathcal{Q}$ also covers $\bb{R}^d$. Assume $\mathcal{P}$ does not cover $\bb{R}^d$, i.e. that there exists some $\xi' \in \bb{R}^d$ such that $\xi'\notin (0,\frac{1}{a_m})^d+b_m$ for all $m\in \bb{Z}^d$. Since $\supp(\hat{h}_{m})\subseteq [0,\frac{1}{a_m}]^d+b_m$, and $\hat{h}_{m}$ is continuous, we get $\hat{h}_{m}(\xi')=0$ for all $m\in \bb{Z}^d$. This contradicts the inequality in \eqref{eq:ConstructDecom1} concerning the lower frame bound and thus shows that $\mathcal{P}$ covers $\bb{R}^d$. Now, \Defenuref{Def:PainlessNSGF}{Def:PainlessNSGF3} is precisely the admissibility condition for $\mathcal{Q}$ and thus guarantees that both $\mathcal{P}$ and $\mathcal{Q}$ are admissible coverings. This shows \Defenuref{Def:SAC}{Def:SAC1}.

\item If $(A_{m'}Q+c_{m'})\cap(A_{m}Q+c_{m})\neq \emptyset$, then $a_{m'}\asymp a_m$ according to \Defenuref{Def:PainlessNSGF}{Def:PainlessNSGF3}. Furthermore, since $A_{m'}^{-1}A_m$ is a diagonal matrix and $\varepsilon_m=C_*/a_m$,
\begin{equation*}
\norm{A_{m'}^{-1}A_m}_{\ell^\infty(\bb{R}^{d\times d})}=\frac{a_{m'}}{a_m}\leq \frac{Ka_m}{a_m}=K,
\end{equation*}
for some $K>0$, so \Defenuref{Def:SAC}{Def:SAC2} is satisfied.

\item To show \Defenuref{Def:SAC}{Def:SAC3} we note that 
\begin{equation*}
\norm{A_m^{-1}}_{\ell^\infty(\bb{R}^d\times \bb{R}^d)}=\frac{a_m}{2C_*+1}\leq \frac{a}{2C_*+1},\quad \forall m\in \bb{Z}^d,
\end{equation*}
according to \Defenuref{Def:PainlessNSGF}{Def:PainlessNSGF2}.
\item Finally, \Defref{Def:SAC}\eqref{Def:SAC4}-\eqref{Def:SAC5} follow directly from \Defref{Def:PainlessNSGF}\eqref{Def:PainlessNSGF4}-\eqref{Def:PainlessNSGF5}.
\end{enumerate}
\end{proof}
Since $\mathcal{Q}$ is a structured covering, \Proref{PRO:BAPU} applies and we obtain a BAPU $\{\psi_T\}_{T\in \mathcal{T}}$ subordinate to $\mathcal{Q}$. Given parameters $s\in \bb{R}$ and $0<p,q<\infty$ we may, therefore, construct the associated decomposition space $D(\mathcal{Q},L^p,\ell^q_{\omega^s})$. For notational convenience, we change notation and write $\{h_{T,n}\}_{T\in\mathcal{T},n\in \bb{Z}^d}$, such that $\supp(\hat{h}_{T,n})\subset Q_T$ for all $T\in \mathcal{T}$ and all $n\in \bb{Z}^d$. Since $A_T=(2\varepsilon_T+a_T^{-1})\cdot I_d$, the chain rule and \Defenuref{Def:PainlessNSGF}{Def:PainlessNSGF1} yield
\begin{align}\label{eq:Characterization2}
\abs{\partial_\xi^\beta \left[\hat{h}_T(T\xi)\right]}&=\abs{\left[\partial_\xi^\beta\hat{h}_T\right]\left(T\xi\right)}\cdot \left(2\varepsilon_T+\frac{1}{a_T}\right)^{\abs{\beta}}\notag \\
&\leq C_\beta a_T^{d/2}\cdot \left(2C_*+1\right)^{\abs{\beta}}\chi_{Q_T}(T\xi)=C_\beta'a_T^{d/2}\chi_Q(\xi),\quad \forall \xi \in \bb{R}^d.
\end{align} 
Using \eqref{eq:Characterization2} we can prove the following decay property of $\{h_{T,n}\}_{T,n}$. 
\begin{Pro}\label{Pro:Decay}
For every $N\in \bb{N}$ there exists a constant $C_N>0$ such that for $T=A_T(\cdot)+c_T\in \mathcal{T}$ and $n\in \bb{Z}^d$, 
\begin{equation*}
\abs{h_{T,n}(x)}\leq C_N\abs{T}^{1/2}\left(1+\norm{A_T(x-na_T)}_2\right)^{-N},\quad \forall x\in \bb{R}^d.
\end{equation*}
\end{Pro}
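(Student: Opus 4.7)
The plan is to reduce the estimate to a bound on $h_T$ itself (using $h_{T,n}(x)=h_T(x-na_T)$) and then exploit the smoothness-implies-decay principle on the Fourier side, with the uniform derivative bound \eqref{eq:Characterization2} doing all the work. Setting $y:=x-na_T$ and writing $h_T$ via Fourier inversion, I would change variables $\xi = T\eta = A_T\eta+c_T$ with Jacobian $|T|$; since $A_T=(2\varepsilon_T+a_T^{-1})I_d$ is symmetric, this produces
\begin{equation*}
h_T(y)=|T|\,e^{2\pi i c_T\cdot y}\,\mathcal{F}^{-1}g(A_Ty), \qquad g(\eta):=\hat{h}_T(T\eta).
\end{equation*}
By construction $g\in C_c^\infty(\bb{R}^d)$ has support compactly contained in $Q=(0,1)^d$ (in fact inside $\overline{P}$), and \eqref{eq:Characterization2} supplies the uniform derivative bound $\abs{\partial^\beta g(\eta)}\leq C_\beta'\,a_T^{d/2}\chi_Q(\eta)$ with $C_\beta'$ independent of $T$.

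Next I would apply the standard smoothness-to-decay argument. The identity $(2\pi i z)^\alpha \mathcal{F}^{-1}g(z)=(-1)^{\abs{\alpha}}\mathcal{F}^{-1}[\partial^\alpha g](z)$ follows from integrating by parts inside the inverse Fourier integral, with no boundary contributions thanks to the compact support of $g$. Combining this with the elementary comparison $(1+\norm{z}_2)^N\leq c_N\sum_{\abs{\alpha}\leq N}\abs{z^\alpha}$ and using $|Q|=1$ to pass from an $L^\infty$ bound to an $L^1$ bound on $\partial^\alpha g$, I obtain
\begin{equation*}
(1+\norm{A_Ty}_2)^N\,\abs{\mathcal{F}^{-1}g(A_Ty)}\leq C'_N\sum_{\abs{\alpha}\leq N}\norm{\partial^\alpha g}_{L^1}\leq \widetilde{C}_N\,a_T^{d/2},
\end{equation*}
with constants independent of $T$.

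Putting the two displays together yields $\abs{h_T(y)}\leq \widetilde{C}_N\,|T|\,a_T^{d/2}\,(1+\norm{A_Ty}_2)^{-N}$. The final bookkeeping step is to note that the explicit form $A_T=(2C_*+1)a_T^{-1}I_d$ gives $|T|=(2C_*+1)^d a_T^{-d}$, hence $|T|\,a_T^{d/2}=(2C_*+1)^{d/2}|T|^{1/2}$; substituting $y=x-na_T$ produces the claimed inequality. I do not anticipate any real obstacle beyond clean bookkeeping; the single point to watch is that \Defenuref{Def:PainlessNSGF}{Def:PainlessNSGF1} genuinely supplies the constants $C_\beta$ uniformly in $m\in\bb{Z}^d$, which is precisely what makes the constants in \eqref{eq:Characterization2} $T$-independent and hence makes the resulting $C_N$ depend only on $N$.
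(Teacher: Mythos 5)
Your proposal is correct and takes essentially the same route as the paper's own proof: the paper likewise derives $h_T(y)=e^{2\pi i c\cdot y}\abs{T}\,g_T(A_Ty)$ with $\hat{g}_T(\xi)=\hat{h}_T(T\xi)$ (its \eqref{eq:Decay4}), runs the same smoothness-to-decay estimate powered by the uniform bound \eqref{eq:Characterization2} (its \eqref{eq:Decay1}), and uses the identical bookkeeping $\abs{T}\abs{Q}=(2C_*+1)^d a_T^{-d}$, hence $a_T^{d/2}=C\abs{T}^{-1/2}$ (its \eqref{eq:Decay2}). The only differences are cosmetic (order of steps, and your side remark that $\supp(g)\subset \overline{P}$, which the argument does not need).
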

\begin{proof}
We will use the fact that
\begin{equation}\label{eq:Wellknown1}
u(\xi)^N=\left(1+\norm{\xi}_2\right)^N\asymp\sum_{\abs{\beta}\leq N}\abs{\xi^\beta},\quad \xi\in \bb{R}^d,
\end{equation}
for any $N\in \bb{N}$ with $\beta\in \bb{N}^d_0$. Let $\hat{g}_T(\xi):=\hat{h}_T(T\xi)$ such that $\supp(\hat{g}_T)\subset Q$ for all $T\in \mathcal{T}$. Using \eqref{eq:Wellknown1} we get
\begin{align*}
\abs{g_T(x)}&\leq C_1(1+\norm{x}_2)^{-N}\sum_{\abs{\beta}\leq N}\abs{x^\beta g_{T}(x)}\\
&=C_1(1+\norm{x}_2)^{-N}\sum_{\abs{\beta}\leq N}\abs{\mathcal{F}^{-1}\left[\partial_\xi^\beta \hat{g}_{T}\right](x)}\\
&\leq C_1(1+\norm{x}_2)^{-N}\sum_{\abs{\beta}\leq N}\int_{\bb{R}^d}\abs{\partial_\xi^\beta \hat{g}_{T}(\xi)}d\xi,\quad x\in \bb{R}^d.
\end{align*}
Applying \eqref{eq:Characterization2} we may continue and write
\begin{equation}\label{eq:Decay1}
\abs{g_T(x)}\leq C_2a_T^{d/2}(1+\norm{x}_2)^{-N}\sum_{\abs{\beta}\leq N}\int_{\bb{R}^d}\chi_Q(\xi)d\xi=C_3a_T^{d/2}(1+\norm{x}_2)^{-N}.
\end{equation}
Now, since $\varepsilon_T=C_*/a_T$,
\begin{equation}\label{eq:Decay2}
\abs{T}\abs{Q}=\abs{Q_T}=\left(2\varepsilon_T+\frac{1}{a_T}\right)^d=(2C_*+1)^d(a_T)^{-d}.
\end{equation}
Hence, $a_T^{d/2}=C|T|^{-1/2}$ so \eqref{eq:Decay1} yields
\begin{equation}\label{eq:Decay3}
\abs{g_T(x)}\leq C_4\abs{T}^{-1/2}(1+\norm{x}_2)^{-N},\quad x\in \bb{R}^d.
\end{equation}
Using the fact that $A_T$ is a diagonal matrix, we obtain the relationship 
\begin{align}\label{eq:Decay4}
h_T(x)&=\int_{\bb{R}^d}\hat{h}_T(\xi)e^{2\pi i \xi\cdot x}d\xi=\abs{T}\int_{\bb{R}^d}\hat{g}_T(u)e^{2\pi i (A_Tu+c)\cdot x}du\notag\\
&=e^{2\pi ic\cdot x}\abs{T}\int_{\bb{R}^d}\hat{g}_T(u)e^{2\pi iu\cdot A_Tx}du=e^{2\pi ic\cdot x}\abs{T}g_T(A_Tx),\quad x\in \bb{R}^d.
\end{align}
Combining \eqref{eq:Decay4} and \eqref{eq:Decay3} we arrive at
\begin{align*}
\abs{h_{T,n}(x)}&=\abs{h_T(x-na_T)}=\abs{T}\abs{g_T(A_T(x-na_T))}\\
&\leq C_4\abs{T}^{1/2}(1+\norm{A_T(x-na_T)}_2)^{-N},\quad x\in \bb{R}^d.
\end{align*}
This proves the proposition.
\end{proof}
As a direct consequence of \Proref{Pro:Decay} we can prove the following lemma. 
\begin{Lem}
For $0<p<\infty$, we have
\begin{align}
\sup_{x\in \bb{R}^d}\left\{\norm{\left\{h_{T,n}(x)\right\}_{n\in \bb{Z}^d}}_{\ell^p}\right\}&\leq C\abs{T}^{1/2},\quad \text{and}\label{eq:sup1}\\
\sup_{n\in \bb{Z}^d}\norm{h_{T,n}}_{L^p}&\leq C'\abs{T}^{1/2-1/p},\label{eq:sup2}
\end{align}
with constants $C,C'>0$ independent of $T\in \mathcal{T}$.
\end{Lem}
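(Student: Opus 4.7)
The plan is to apply \Proref{Pro:Decay} with a sufficiently large decay exponent $N$ (to be chosen depending on $p$ and $d$) and then estimate a lattice sum for \eqref{eq:sup1} and a translation-invariant integral for \eqref{eq:sup2}. The key observation in both cases is that although $A_T$ and $a_T$ depend on $T$, their product satisfies $A_T\cdot a_T = (2C_*+1)\,I_d$ thanks to the identity $\varepsilon_T=C_*/a_T$; this removes all $T$-dependence from the geometry of the shifted lattice $\{A_T n a_T\}_{n\in \bb{Z}^d}$, which is simply a dilate of $\bb{Z}^d$ by the fixed factor $2C_*+1$.

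For \eqref{eq:sup1}, I would fix $N\in\bb{N}$ with $Np>d$, raise the bound of \Proref{Pro:Decay} to the $p$-th power, and sum over $n$ to obtain
\begin{equation*}
\sum_{n\in\bb{Z}^d}\abs{h_{T,n}(x)}^p\leq C_N^p\abs{T}^{p/2}\sum_{n\in\bb{Z}^d}\left(1+\norm{A_Tx-(2C_*+1)n}_2\right)^{-Np}.
\end{equation*}
Since $(2C_*+1)\bb{Z}^d$ is a fixed $(2C_*+1)$-separated set in $\bb{R}^d$, the sum on the right is uniformly bounded in $A_Tx\in \bb{R}^d$ (hence in $x$ and $T$) by a standard lattice estimate for $Np>d$. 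Taking the $p$-th root yields \eqref{eq:sup1}.

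For \eqref{eq:sup2}, I would again use \Proref{Pro:Decay} with $N$ chosen so that $Np>d$ and compute
\begin{equation*}
\norm{h_{T,n}}_{L^p}^p\leq C_N^p\abs{T}^{p/2}\int_{\bb{R}^d}\left(1+\norm{A_T(x-na_T)}_2\right)^{-Np}dx.
\end{equation*}
The substitution $u=A_T(x-na_T)$ contributes a Jacobian $\abs{T}^{-1}$ and eliminates both $n$ and $T$ from the integrand, leaving the finite integral $\int_{\bb{R}^d}(1+\norm{u}_2)^{-Np}du$. Thus $\norm{h_{T,n}}_{L^p}^p\leq C\abs{T}^{p/2-1}$ uniformly in $n$ and $T$, and taking the $p$-th root gives \eqref{eq:sup2}.

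No serious obstacle is anticipated; the only delicate point is ensuring that $N$ can be chosen freely (any $N\in\bb{N}$ is admissible in \Proref{Pro:Decay}), in particular so large that $Np>d$ for the given $p\in(0,\infty)$. The geometric observation $A_T a_T=(2C_*+1)I_d$ is what guarantees that the estimates are uniform across the whole family $\mathcal{T}$.
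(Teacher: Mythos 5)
Your proposal is correct and follows essentially the same route as the paper: both apply \Proref{Pro:Decay} with $N>d/p$, bound the lattice sum for \eqref{eq:sup1} and change variables $u=A_T(x-na_T)$ with Jacobian $\abs{T}^{-1}$ for \eqref{eq:sup2}, using the integrability of $(1+\norm{\cdot}_2)^{-Np}$ for $Np>d$. Your identity $A_Ta_T=(2C_*+1)I_d$ is exactly the uniformity mechanism the paper encodes via $a_T^d=C\abs{T}^{-1}$ from \eqref{eq:Decay2}, so the two arguments coincide.
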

\begin{proof}
We will use the fact that
\begin{equation}\label{eq:Wellknown2}
\int_{\bb{R}^d}u(\xi)^{-m}d\xi=\int_{\bb{R}^d}\left(1+\norm{\xi}_2\right)^{-m}d\xi<\infty,
\end{equation}
for any $m>d$. Choosing $N>d/p$ in \Proref{Pro:Decay}, then \eqref{eq:Wellknown2} yields
\begin{align}\label{eq:MainLemmaProof1}
\norm{\left\{h_{T,n}(x)\right\}_{n\in \bb{Z}^d}}_{\ell^p}&\leq C_1\abs{T}^{1/2}\left(\sum_{n\in \bb{Z}^d}(1+\norm{A_T(x-na_T)}_2)^{-Np}\right)^{1/p}\notag \\
&\leq C_2\abs{T}^{1/2}(a_T^d\abs{T})^{-1/p}.
\end{align}
According to \eqref{eq:Decay2}, $a_T^d=C|T|^{-1}$, which inserted into \eqref{eq:MainLemmaProof1} yields \eqref{eq:sup1}. To show \eqref{eq:sup2}, we again let $N>d/p$ in \Proref{Pro:Decay}, so \eqref{eq:Wellknown2} yields
\begin{equation*}
\norm{h_{T,n}}_{L^p}\leq C_1\abs{T}^{1/2}\left(\int_{\bb{R}^d}(1+\norm{A_T(x-na_T)}_2)^{-Np}dx\right)^{1/p}\leq C_2\abs{T}^{1/2-1/p}.
\end{equation*}
This proves \eqref{eq:sup2}.
\end{proof}
In the next section we use the painless NSGF $\{h_{T,n}\}_{T,n}$ to prove a complete characterization of the corresponding decomposition space $D(\mathcal{Q},L^p,\ell^q_{\omega^s})$.
\section{Characterization of Decomposition Spaces}\label{Sec:5}
The main result of this section is the characterization given in \Theref{The:MainResult}. To prove this result, we follow the approach taken in \cite{BorupNielsen2007} where the authors proved a similar result for a certain type of tight frames for $\bb{R}^d$ (see \cite[Proposition 3]{BorupNielsen2007}). Since the frames we consider are not assumed to be tight we need to modify the arguments given in \cite{BorupNielsen2007}. We start with the following observations.
\begin{Lem}\label{Lem:uniformlyboundedfouriermult}
For $0<p< \infty$, the Fourier multiplier
\begin{equation}\label{eq:uniformlyboundedfouriermult1}
\psi_T^h(D)f:=\mathcal{F}^{-1}\left(\psi_T^h\mathcal{F}f\right):=\mathcal{F}^{-1}\Bigg(\frac{\psi_T}{\sum_{l\in \widetilde{T}}\frac{1}{a_l^d}\abs{\hat{h}_l}^2}\mathcal{F}f\Bigg)
\end{equation}
is bounded on the band-limited functions in $L^p(\bb{R}^d)$ uniformly in $T\in \mathcal{T}$. Further,
\begin{equation}\label{eq:uniformlyboundedfouriermult2}
\sup_{x\in \bb{R}^d}\left\{\norm{\left\{\psi_T^h(D)h_{T',n}\right\}_{n\in \bb{Z}^d}}_{\ell^p}\right\}\leq C\abs{T}^{1/2},\quad T\in \mathcal{T},\quad T'\in \widetilde{T},
\end{equation}
with a constant $C>0$ independent of $T\in \mathcal{T}$.
\end{Lem}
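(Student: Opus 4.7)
The plan is to reduce the lemma to facts already established. For the first assertion, I would verify that the modified multiplier $\psi_T^h$ satisfies uniform analogues of \Proenuref{PRO:BAPU}{PRO:BAPU3} and \Proenuref{PRO:BAPU}{PRO:BAPU4}, so that the argument sketched just before \Defref{Def:DS} (combining \Lemref{Lem:BOproof2} with \cite[Lemma~1]{BorupNielsen2007}) applies verbatim with $\psi_T$ replaced by $\psi_T^h$. For the second assertion, I would mimic the proof of \Proref{Pro:Decay} to obtain a pointwise decay estimate on $\mathcal{F}^{-1}(\psi_T^h\hat{h}_{T'})$ and then sum over $n$ exactly as in the proof of \eqref{eq:sup1}.

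To obtain the uniform bounds on $\psi_T^h$, first observe that on $\supp(\psi_T)\subset Q_T$ the denominator
\begin{equation*}
D_T(\xi):=\sum_{l\in\widetilde{T}}a_l^{-d}\abs{\hat{h}_l(\xi)}^2
\end{equation*}
coincides with the full symbol $\sum_{l\in\bb{Z}^d}a_l^{-d}\abs{\hat{h}_l(\xi)}^2$, since $\supp(\hat{h}_l)\subset I_l=Q_l$ forces any index contributing on $Q_T$ to belong to $\widetilde{T}$. Hence the lower frame bound in \eqref{eq:ConstructDecom1} gives $D_T\geq A>0$ on $\supp(\psi_T)$. Applying the quotient rule to $\psi_T/D_T$ and combining \Proenuref{PRO:BAPU}{PRO:BAPU4}, the estimate $\abs{\partial^\beta(a_l^{-d}\abs{\hat{h}_l}^2)}\leq C_\beta a_l^{\abs{\beta}}\leq C_\beta a^{\abs{\beta}}$ derived from \Defref{Def:PainlessNSGF}\eqref{Def:PainlessNSGF1}-\eqref{Def:PainlessNSGF2}, the bounded overlap $\abs{\widetilde{T}}\leq n_0$, and the lower bound $D_T\geq A$, I would derive
\begin{equation*}
\abs{\partial^\alpha\psi_T^h(\xi)}\leq \tilde{C}_\alpha\chi_{Q_T}(\xi),\qquad \xi\in\bb{R}^d,\ T\in\mathcal{T},
\end{equation*}
with $\tilde{C}_\alpha$ independent of $T$. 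Combined with $\supp(\psi_T^h)\subset Q_T$, this is precisely the input needed for the first claim.

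For \eqref{eq:uniformlyboundedfouriermult2}, the identity $\hat{h}_{T',n}(\xi)=\hat{h}_{T'}(\xi)e^{-2\pi ina_{T'}\cdot\xi}$ yields
\begin{equation*}
\psi_T^h(D)h_{T',n}(x)=g_T(x-na_{T'}),\qquad g_T:=\mathcal{F}^{-1}(\psi_T^h\hat{h}_{T'}).
\end{equation*}
I would then repeat the proof of \Proref{Pro:Decay}, substituting $\xi=T\eta$ and invoking the chain rule together with \eqref{eq:Characterization2} for $\hat{h}_{T'}(T\cdot)$ (valid because $T'\in\widetilde{T}$ gives $a_{T'}\asymp a_T$ and $\abs{T'}\asymp\abs{T}$) alongside the analogous bound for $\psi_T^h(T\cdot)$ from the preceding paragraph. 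This produces
\begin{equation*}
\abs{g_T(x)}\leq C_N\abs{T}^{1/2}(1+\norm{A_Tx}_2)^{-N},\qquad x\in\bb{R}^d,
\end{equation*}
for every $N\in\bb{N}$. Choosing $N>d/p$ and summing over $n\in\bb{Z}^d$ via the Riemann-sum argument from the proof of \eqref{eq:sup1}, combined with \eqref{eq:Decay2}, delivers the desired bound of order $\abs{T}^{1/2}$ uniformly in $x$.

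The main technical obstacle is the uniform derivative bound on $\psi_T^h$: every instance of $T$-dependence (the scale $a_T$, the localization $Q_T$, and the sum over $\widetilde{T}$) must be absorbed into constants depending only on the differentiation order. The mechanisms that make this possible are the neighborhood comparability $a_l\asymp a_T$, the universal ceiling $a_T\leq a$, the uniform overlap bound $n_0$, and the positive lower bound on $D_T$ coming from the lower frame inequality; the rest of the proof is then straightforward adaptation of \Proref{Pro:Decay} and of the multiplier argument preceding \Defref{Def:DS}.
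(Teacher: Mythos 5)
Your outline follows the paper's proof quite closely (rescale, quotient rule on $\psi_T/D_T$, transference via \Lemref{Lem:BOproof2} and \cite[Lemma 1]{BorupNielsen2007}, then a \Proref{Pro:Decay}-type decay estimate summed as in the proof of \eqref{eq:sup1}), and several of your observations are correct and useful: on $\supp(\psi_T)$ the truncated denominator $D_T$ agrees with the full frame symbol, so \eqref{eq:ConstructDecom1} gives $D_T\geq A$; the shift identity $\psi_T^h(D)h_{T',n}(x)=g_T(x-na_{T'})$; and the use of $a_{T'}\asymp a_T$, $\abs{T'}\asymp\abs{T}$ for $T'\in\widetilde{T}$. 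The genuine gap is the step where you estimate $\abs{\partial^\beta(a_l^{-d}\abs{\hat{h}_l}^2)}\leq C_\beta a_l^{\abs{\beta}}\leq C_\beta a^{\abs{\beta}}$, deliberately discarding the scale factor, and then assert that the resulting \emph{flat} bound $\abs{\partial^\alpha\psi_T^h(\xi)}\leq \tilde{C}_\alpha\chi_{Q_T}(\xi)$ is ``precisely the input needed'' for the multiplier claim. It is not. The argument preceding \Defref{Def:DS} rests on affine transference: since the sets $Q_T$ vary, \Lemref{Lem:BOproof2} together with \cite[Lemma 1]{BorupNielsen2007} requires a uniform bound on $\norm{\mathcal{F}^{-1}(\psi_T^h\circ T)}_{L^{\tilde{p}}}$, $\tilde{p}=\min\{1,p\}$ --- i.e.\ the analogue of \Proenuref{PRO:BAPU}{PRO:BAPU3}, not just of \Proenuref{PRO:BAPU}{PRO:BAPU4}. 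Rescaling your flat bound by $T$ introduces the chain-rule factor $(2\varepsilon_T+1/a_T)^{\abs{\beta}}\asymp a_T^{-\abs{\beta}}$, and this is \emph{unbounded} over $\mathcal{T}$: \Defref{Def:PainlessNSGF} imposes only the ceiling $a_m\leq a$, no positive floor, so the cubes $Q_T$ (side $(2C_*+1)/a_T$) are arbitrarily large in the intended dyadic/constant-$Q$ settings. Nor can one bypass the rescaling: a symbol with merely bounded derivatives on a cube of side $R$ can have $\norm{\mathcal{F}^{-1}\cdot}_{L^1}$ of order $\sqrt{R}$ (e.g.\ a smoothed random-sign sum $\sum_{j\leq R}\epsilon_j\phi(\cdot-j)$), so your \Proenuref{PRO:BAPU}{PRO:BAPU4}-type estimate genuinely cannot deliver the uniform $L^p$ bound, and ``applies verbatim'' fails.

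The repair is exactly what the paper's proof does in \eqref{eq:lemmauniformoperatorbound2} and the lines following it: keep the scale information. Retain $a_l^{\abs{\beta}}\asymp a_T^{\abs{\beta}}$ for $l\in\widetilde{T}$ (by \Defenuref{Def:PainlessNSGF}{Def:PainlessNSGF3}) in the denominator estimate, and use that the numerator gains as well, $\abs{\partial^\gamma\psi_T}\leq C_\gamma a_T^{\abs{\gamma}}\chi_{Q_T}$ --- a bound stronger than \Proenuref{PRO:BAPU}{PRO:BAPU4} as stated, but immediate from the explicit BAPU construction $\psi_T=\Phi(T^{-1}\cdot)/\sum_{T'}\Phi(T'^{-1}\cdot)$ together with \Defenuref{Def:SAC}{Def:SAC2}--\eqref{Def:SAC3}. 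Then every quotient-rule term of $\partial^\beta\psi_T^h$ carries a factor $a_T^{\abs{\beta}}$, the chain-rule factor $a_T^{-\abs{\beta}}$ cancels, one gets $\abs{\partial^\beta\psi_T^{h'}(\xi)}\leq C'\chi_Q(\xi)$ for $\psi_T^{h'}=\psi_T^h\circ T$ uniformly in $T$, and \eqref{eq:lemmauniformoperatorbound1} closes the first claim. The same defect propagates into your second part: the ``analogous bound for $\psi_T^h(T\cdot)$ from the preceding paragraph'' was never actually established there in rescaled form, and the decay estimate $\abs{g_T(x)}\leq C_N\abs{T}^{1/2}(1+\norm{A_Tx}_2)^{-N}$ with the correct normalization requires $\int_{\bb{R}^d}\abs{\partial_\xi^\beta[(\psi_T^h\hat{h}_{T'})(T\xi)]}d\xi\leq Ca_T^{d/2}$, which again needs the gaining bounds on $\psi_T^h\circ T$ alongside \eqref{eq:Characterization2}; once those are in place, your Leibniz-plus-Riemann-sum conclusion is sound and coincides with the paper's appeal to the proofs of \Proref{Pro:Decay} and \eqref{eq:sup1}.
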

\begin{proof}
Let $\psi_T^{h'}(\xi):=\psi_T^h(T(\xi))$. For $N>d/p$, \eqref{eq:Wellknown2} and \eqref{eq:Wellknown1} imply
\begin{align}\label{eq:lemmauniformoperatorbound1}
\norm{\mathcal{F}^{-1}\psi_T^{h'}}_{L^p}&\leq C_1\norm{u(\cdot)^N\mathcal{F}^{-1}\psi_T^{h'}}_{L^\infty}\leq C_2 \sum_{\abs{\beta}\leq N}\norm{(\cdot)^\beta\mathcal{F}^{-1}\psi_T^{h'}}_{L^\infty}\notag\\
&= C_2 \sum_{\abs{\beta}\leq N}\norm{\mathcal{F}^{-1}\left(\partial^\beta\psi_T^{h'}\right)}_{L^\infty}\leq C_2 \sum_{\abs{\beta}\leq N}\norm{\partial^\beta\psi_T^{h'}}_{L^1}.
\end{align}
Since $\varepsilon_T=C_*/a_T$, the chain rule yields
\begin{equation}\label{eq:lemmauniformoperatorbound2}
\partial^\beta\psi_T^{h'}(\xi)=\left(\partial^\beta\psi_T^h\right)(T\xi)\left(2\varepsilon_T+\frac{1}{a_T}\right)^{\abs{\beta}}=Ca_T^{-\abs{\beta}}\left(\partial^\beta\psi_T^h\right)(T\xi).
\end{equation}
For estimating $\partial^\beta\psi_T^h$ we use the quotient rule. Because all derivatives of $\psi_T$ are bounded according to \Proenuref{PRO:BAPU}{PRO:BAPU4}, we need only to consider the derivatives of the denominator of $\psi_T^h$. The sum in the denominator consists of at most $n_0$ terms and for each term in the sum, the chain rule and \Defref{Def:PainlessNSGF}\eqref{Def:PainlessNSGF1}-\eqref{Def:PainlessNSGF2} imply an upper bound of $Ca_T^{|\beta|}$. Therefore, \eqref{eq:lemmauniformoperatorbound2} yields $|\partial^\beta\psi_T^{h'}(\xi)|\leq C'\chi_{Q}(\xi)$, since $\supp(\psi_T^{h'})\subset Q$ for all $T\in \mathcal{T}$. Combing this with \eqref{eq:lemmauniformoperatorbound1} we get $\|\mathcal{F}^{-1}\psi_T^{h'}\|_{L^p}\leq C_3$. It now follows from \Lemref{Lem:BOproof2} \Pageref{Lem:BOproof2} that $f\rightarrow \mathcal{F}^{-1}(\psi_T^{h'}\mathcal{F}f)$ defines a bounded operator on the band-limited functions in $L^p(\bb{R}^d)$ uniformly in $T\in \mathcal{T}$. Finally, applying \cite[Lemma 1]{BorupNielsen2007} we obtain the same statement for $\psi_T^{h}(D)$.

We now prove \eqref{eq:uniformlyboundedfouriermult2}. Repeating the arguments from the proof of \Proref{Pro:Decay} (using \eqref{eq:ConstructDecom1} and \Defenuref{Def:PainlessNSGF}{Def:PainlessNSGF1}) we can prove the same decay property for $\psi_T^h(D)h_{T',n}$. The result therefore follows from the arguments in the proof of \eqref{eq:sup1}.
\end{proof}
The statement in \Theref{The:MainResult} follows directly once we have proven the following technical lemma. We use the notation $\widetilde{\psi}_T:=\sum_{T'\in \widetilde{T}}\psi_{T'}$.
\begin{Lem}\label{LEM:MAINLEMMA}
Given $f\in \mathcal{S}(\bb{R}^d)$ and $0<p<\infty$. For all $T\in \mathcal{T}$,
\begin{align}
\norm{\left\{\scalarp{f,h_{T,n}}\right\}_{n\in \bb{Z}^d}}_{\ell^p}&\leq C\abs{T}^{1/p-1/2}\norm{\widetilde{\psi}_T(D)f}_{L^p},\quad \text{and}\label{eq:MainLemma1}\\
\norm{\psi_T(D)f}_{L^p}&\leq C'\abs{T}^{1/2-1/p}\sum_{T'\in \widetilde{T}}\norm{\left\{\scalarp{f,h_{T',n}}\right\}_{n\in \bb{Z}^d}}_{\ell^p},\label{eq:MainLemma2}
\end{align}
with constants $C_1,C_2>0$ independent of $T\in \mathcal{T}$.
\end{Lem}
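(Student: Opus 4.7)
\textbf{Plan for the proof of \Lemref{LEM:MAINLEMMA}.} The two inequalities should be handled separately but with a common set of ingredients: the spatial decay of the atoms (\Proref{Pro:Decay}), the explicit formula for the canonical dual (\Theref{The:NSGFTheorem}), the support constraints linking $\psi_T$, $\widetilde{\psi}_T$ and the $\hat{h}_{T',n}$, and a Plancherel--Polya / Peetre-type sampling inequality for band-limited functions (for the case $0<p<1$).

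For \eqref{eq:MainLemma1} the key observation is that $\widetilde{\psi}_T \equiv 1$ on $Q_T \supset \supp(\hat{h}_T)$, so $\scalarp{f,h_{T,n}} = \scalarp{\widetilde{\psi}_T(D)f, h_{T,n}}$. Set $g := \widetilde{\psi}_T(D)f$, so that $\scalarp{g,h_{T,n}} = G(na_T)$ where $G(y):=\int g(x)\overline{h_T(x-y)}\,dx$ is band-limited with $\hat{G}$ supported in $\bigcup_{T'\in \widetilde{T}}Q_{T'}$, a set of measure comparable to $|T|$. The strategy is then: (i) apply a Plancherel--Polya-type sampling inequality for $G$, which yields $\|\{G(na_T)\}_n\|_{\ell^p}\leq C|T|^{1/p}\|G\|_{L^p}$ at the critical sampling density $a_T^{-d}\asymp |T|$; and (ii) bound $\|G\|_{L^p}\leq C|T|^{-1/2}\|g\|_{L^p}$ using $\|h_T\|_{L^1}\leq C|T|^{-1/2}$ from \eqref{eq:sup2}. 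For $p\geq 1$ step (ii) is Young's inequality; for $0<p<1$ it has to be replaced by a band-limited Young-type estimate, using that the convolution $g*\overline{h_T(-\cdot)}$ remains band-limited to a set of measure $\asymp |T|$, via the Peetre maximal function associated to the BAPU.

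For \eqref{eq:MainLemma2} I would expand $f$ using the canonical dual frame of \Theref{The:NSGFTheorem},
\begin{equation*}
f = \sum_{T'\in \mathcal{T}, n\in \bb{Z}^d}\scalarp{f,h_{T',n}}\tilde{h}_{T',n},
\end{equation*}
apply $\psi_T(D)$, and use the support condition $\supp(\hat{\tilde{h}}_{T',n})=\supp(\hat{h}_{T'})\subset Q_{T'}$ to collapse the outer sum to $T'\in \widetilde{T}$. Since $\supp(\psi_T)\subset Q_T$, on this support $\sum_{l\in \bb{Z}^d}\frac{1}{a_l^d}|\hat{h}_l|^2 = \sum_{l\in \widetilde{T}}\frac{1}{a_l^d}|\hat{h}_l|^2$, so comparing with \eqref{eq:uniformlyboundedfouriermult1} yields $\psi_T(D)\tilde{h}_{T',n}=\psi_T^h(D)h_{T',n}$, giving
\begin{equation*}
\psi_T(D)f = \sum_{T'\in \widetilde{T}}\sum_{n\in \bb{Z}^d}\scalarp{f,h_{T',n}}\,\psi_T^h(D)h_{T',n}.
\end{equation*}
Repeating the argument of \Proref{Pro:Decay} with $\hat{h}_T$ replaced by $\psi_T^h\hat{h}_{T'}$ (whose derivatives on the reference cube $Q$ satisfy the same bound, cf.\ the proof of \Lemref{Lem:uniformlyboundedfouriermult}) yields the decay $|\psi_T^h(D)h_{T',n}(x)|\leq C_N|T|^{1/2}(1+\|A_{T'}(x-na_{T'})\|_2)^{-N}$, hence $\|\psi_T^h(D)h_{T',n}\|_{L^p}\leq C|T|^{1/2-1/p}$. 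The case $p\geq 1$ then reduces via the triangle inequality and Hölder (applied to $\sum_n|c_n||\psi_T^h(D)h_{T',n}(x)|$ with the weights $(1+\|A_{T'}(x-na_{T'})\|_2)^{-N}$, whose $p'$-summability in $n$ is uniform since $\{na_{T'}\}$ is a fixed lattice after rescaling by $A_{T'}$). The case $0<p<1$ uses the $p$-triangle inequality $\|\sum_n c_n\phi_n\|_{L^p}^p\leq \sum_n|c_n|^p\|\phi_n\|_{L^p}^p$, and the finiteness of $|\widetilde{T}|\leq n_0$ to pass the outer sum past the $\ell^p$ quasi-norm.

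The main obstacle is the quasi-Banach regime $0<p<1$, where Young's inequality fails: in \eqref{eq:MainLemma1} one must replace it by a Plancherel--Polya/Peetre-type estimate that exploits the band-limited nature of both factors in $g*\overline{h_T(-\cdot)}$, and in \eqref{eq:MainLemma2} one must choose the decay exponent $N$ large enough (depending on $p$) for the lattice sums $\sum_n(1+\|\cdot\|)^{-Np}$ to converge; both steps essentially rely on the uniform structure guaranteed by \Defref{Def:SAC}\eqref{Def:SAC2}-\eqref{Def:SAC3} and \Defref{Def:PainlessNSGF}\eqref{Def:PainlessNSGF1}-\eqref{Def:PainlessNSGF3}.
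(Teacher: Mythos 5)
Your proposal takes essentially the same route as the paper's own proof: for \eqref{eq:MainLemma2} the paper likewise expands $f$ in the canonical dual frame \eqref{eq:NSGFCanonicalDualFrame}, uses the support collapse identifying $\psi_T(D)\tilde{h}_{T',n}$ with $\psi_T^h(D)h_{T',n}$ together with \Lemref{Lem:uniformlyboundedfouriermult} and \eqref{eq:sup2}, and splits into the $p$-triangle inequality for $p\leq 1$ and the same H\"older argument (with the uniform $\ell^{p'}$-type bound \eqref{eq:uniformlyboundedfouriermult2}) for $p>1$. For \eqref{eq:MainLemma1} the paper simply invokes \eqref{eq:sup1} and the first part of the proof of \cite[Lemma 2]{BorupNielsen2007}, which is exactly the Plancherel--Polya/Peetre-type sampling argument you sketch, so your plan is correct and matches the intended proof.
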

\begin{proof}
The proof of \eqref{eq:MainLemma1} follows directly from \eqref{eq:sup1} and the arguments for the first part of the proof for \cite[Lemma 2]{BorupNielsen2007}. To prove \eqref{eq:MainLemma2} we first assume $p\leq 1$ and note
\begin{align}\label{eq:newLemmaProof1}
\norm{\psi_T(D)f}_{L^p}&\leq C_1\sum_{T'\in \widetilde{T}}\sum_{n\in \bb{Z}^d}\abs{\scalarp{f,h_{T',n}}}\norm{\psi_T(D)\tilde{h}_{T',n}}_{L^p} \\
&\leq C_2\sum_{T'\in \widetilde{T}}\left(\sum_{n\in \bb{Z}^d}\abs{\scalarp{f,h_{T',n}}}^p\norm{\psi_T(D)\tilde{h}_{T',n}}_{L^p}^p\right)^{1/p},\notag
\end{align}
with $\{\tilde{h}_{T,n}\}_{T,n}$ being the dual frame given in \eqref{eq:NSGFCanonicalDualFrame} \Pageref{eq:NSGFCanonicalDualFrame}. Applying \eqref{eq:uniformlyboundedfouriermult1} and \eqref{eq:sup2} this proves \eqref{eq:MainLemma2} for the case $p\leq 1$. For $p> 1$, we note that Hölder's inequality (with $p'$ being the conjugate index of $p$) yields
\begin{align*}
&\norm{\sum_{n\in \bb{Z}^d}\scalarp{f,h_{T',n}}\psi_T(D)\tilde{h}_{T',n}}_{L^p}^p\\
&\leq\int_{\bb{R}^d}\sum_{n\in \bb{Z}^d}\abs{\scalarp{f,h_{T',n}}}^p\abs{\psi_T(D)\tilde{h}_{T',n}(x)}\left(\sum_{n'\in \bb{Z}^d}\abs{\psi_T(D)\tilde{h}_{T',n'}(x)}\right)^{p/p'}dx\\
&\leq C_1\abs{T}^{p/2p'-1/2}\sum_{n\in \bb{Z}^d}\abs{\scalarp{f,h_{T',n}}}^p,
\end{align*}
according to \Lemref{Lem:uniformlyboundedfouriermult} and \eqref{eq:sup2}. Taking the $p$'th root on both sides and applying \eqref{eq:newLemmaProof1} finishes the proof of \eqref{eq:MainLemma2} for $p> 1$.
\end{proof}
Using the notation of \cite{BorupNielsen2007} we define $L^p-$normalized atoms $h^p_{T,n}:=\abs{T}^{1/2-1/p}h_{T,n}$, for all $T\in \mathcal{T}$, $n\in \bb{Z}^d$ and $0<p<\infty$. We also define the coefficient space $d(\mathcal{Q},\ell^p,\ell^q_{\omega^s})$ as the set of coefficients $\{c_{T,n}\}_{T\in \mathcal{T},n\in \bb{Z}^d}\subset \bb{C}$ satisfying
\begin{equation*}
\norm{\{c_{T,n}\}_{T\in \mathcal{T},n\in \bb{Z}^d}}_{d(\mathcal{Q},\ell^p,\ell^q_{\omega^s})}:=\norm{\left\{\norm{\left\{c_{T,n}\right\}_{n\in \bb{Z}^d}}_{\ell^p}\right\}_{T\in \mathcal{T}}}_{\ell^q_{\omega^s}}<\infty.
\end{equation*}
Combining \Lemref{LEM:MAINLEMMA} with the fact that $\mathcal{S}(\bb{R}^d)$ is dense in $D(\mathcal{Q},L^p,\ell^q_{\omega^s})$ we obtain a characterization similar to that of \cite[Proposition 3]{BorupNielsen2007}.
\begin{The}\label{The:MainResult}
For $s\in \bb{R}$ and $0<p,q<\infty$ we have the equivalence
\begin{equation*}
\norm{f}_{D(\mathcal{Q},L^p,\ell^q_{\omega^s})}\asymp \norm{\left\{\scalarp{f,h^p_{T,n}}\right\}_{T\in \mathcal{T},n\in \bb{Z}^d}}_{d(\mathcal{Q},\ell^p,\ell^q_{\omega^s})},
\end{equation*}
for all $f\in D(\mathcal{Q},L^p,\ell^q_{\omega^s})$.
\end{The}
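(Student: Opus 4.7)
The plan is to deduce both directions of the equivalence directly from \Lemref{LEM:MAINLEMMA}, after first rewriting its conclusions in terms of the $L^p$-normalized atoms $h^p_{T,n}=\abs{T}^{1/2-1/p}h_{T,n}$, and then extend the resulting inequality from $\mathcal{S}(\bb{R}^d)$ to the whole decomposition space by density. Since $\scalarp{f,h^p_{T,n}}=\abs{T}^{1/2-1/p}\scalarp{f,h_{T,n}}$, the factor $\abs{T}^{1/p-1/2}$ in \eqref{eq:MainLemma1} cancels exactly, giving
\[
\norm{\{\scalarp{f,h^p_{T,n}}\}_{n\in\bb{Z}^d}}_{\ell^p}\leq C\norm{\widetilde{\psi}_T(D)f}_{L^p}.
\]
Likewise, rewriting \eqref{eq:MainLemma2} produces the factor $\abs{T}^{1/2-1/p}\abs{T'}^{1/p-1/2}$ for each $T'\in\widetilde{T}$, and the estimate $\abs{T'}\asymp\abs{T}$ (remark following \Defref{Def:SAC}) makes this ratio uniformly bounded, so
\[
\norm{\psi_T(D)f}_{L^p}\leq C'\sum_{T'\in\widetilde{T}}\norm{\{\scalarp{f,h^p_{T',n}}\}_{n\in\bb{Z}^d}}_{\ell^p}.
\]

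Next I would pass to the outer $\ell^q_{\omega^s}$ norm. Since $\widetilde{\psi}_T(D)f=\sum_{T'\in\widetilde{T}}\psi_{T'}(D)f$ and $\abs{\widetilde{T}}\leq n_0$, the $L^p$ (quasi-)triangle inequality gives $\norm{\widetilde{\psi}_T(D)f}_{L^p}\leq C_p\sum_{T'\in\widetilde{T}}\norm{\psi_{T'}(D)f}_{L^p}$, with $C_p$ depending only on $p$ and $n_0$. Writing $a_T:=\norm{\psi_T(D)f}_{L^p}$ and $b_T:=\norm{\{\scalarp{f,h^p_{T,n}}\}_{n}}_{\ell^p}$, the two estimates of the first step read $b_T\leq CC_p\,a_T^+$ and $a_T\leq C' b_T^+$. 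Taking $\ell^q_{\omega^s}$ norms on both sides and applying the boundedness estimate \eqref{eq:Operatornorm} for the map $\{x_T\}\mapsto\{x_T^+\}$ therefore yields, for every $f\in\mathcal{S}(\bb{R}^d)$,
\[
C_1\norm{f}_{D(\mathcal{Q},L^p,\ell^q_{\omega^s})}\leq\norm{\{\scalarp{f,h^p_{T,n}}\}_{T,n}}_{d(\mathcal{Q},\ell^p,\ell^q_{\omega^s})}\leq C_2\norm{f}_{D(\mathcal{Q},L^p,\ell^q_{\omega^s})}.
\]

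Finally, I would extend this equivalence from $\mathcal{S}(\bb{R}^d)$ to $D(\mathcal{Q},L^p,\ell^q_{\omega^s})$ using the density asserted in \Theenuref{THE:DECOMPOSITIONTHEOREM}{THE:DECOMPOSITIONTHEOREM3}. The pairings $\scalarp{f,h^p_{T,n}}$ are intrinsically well defined for every $f\in\mathcal{S}'(\bb{R}^d)\supset D(\mathcal{Q},L^p,\ell^q_{\omega^s})$ because $h^p_{T,n}\in\mathcal{S}(\bb{R}^d)$, and the upper bound proved on the Schwartz class extends the coefficient map continuously to all of $D(\mathcal{Q},L^p,\ell^q_{\omega^s})$ by a routine completeness argument, using \Theenuref{THE:DECOMPOSITIONTHEOREM}{THE:DECOMPOSITIONTHEOREM2}. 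The main obstacle is really just careful bookkeeping in the quasi-Banach regime $0<p<1$ or $0<q<1$: one must check that the constants absorbed from the $L^p$ quasi-triangle inequality and from the ratios $\abs{T'}/\abs{T}$ remain uniform in $T$, but both are guaranteed by \Defref{Def:PainlessNSGF}\eqref{Def:PainlessNSGF3} together with the remark after \Defref{Def:SAC}, so no conceptual difficulty arises.
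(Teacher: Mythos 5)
Your proposal is correct and takes essentially the same route as the paper: the paper's proof of \Theref{The:MainResult} consists precisely of combining \Lemref{LEM:MAINLEMMA} with the density of $\mathcal{S}(\bb{R}^d)$ in $D(\mathcal{Q},L^p,\ell^q_{\omega^s})$, and your intermediate steps (the exact cancellation of the normalization factors, the uniform bound on $\abs{T}^{1/2-1/p}\abs{T'}^{1/p-1/2}$ via $\abs{T'}\asymp\abs{T}$ for $T'\in\widetilde{T}$, and the application of \eqref{eq:Operatornorm} to pass to the outer $\ell^q_{\omega^s}$ norm) are exactly the details the paper leaves implicit. I see no gaps.
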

\begin{Rem}
The characterization in \Theref{The:MainResult} differs from the one given in \cite[Proposition 3]{BorupNielsen2007} in two ways. In \cite{BorupNielsen2007} the frame elements are obtained directly from the structured covering such that the resulting system forms a tight frame. In our framework we take the "reverse" approach and explicitly state sufficient conditions which guarantee the existence of a compatible decomposition space for a given NSGF (cf. \Defref{Def:PainlessNSGF}). More importantly, we show that the assumption on tightness of the frame can be replaced with the structured expression for the dual frame given in \eqref{eq:NSGFCanonicalDualFrame} \Pageref{eq:NSGFCanonicalDualFrame}. 
\end{Rem}
In the next section we use the characterization given in \Theref{The:MainResult} to prove that $\{h_{T,n}^p\}_{T,n}$ forms a Banach frame for $D(\mathcal{Q},L^p,\ell^q_{\omega^s})$ with respect to $d(\mathcal{Q},\ell^p,\ell^q_{\omega^s})$ for $s\in \bb{R}$ and $0<p,q<\infty$. 
\section{Banach Frames for Decomposition Spaces}\label{Sec:6}
Let us start by giving the general definition of a Banach frame \cite{Grochenig2000,Grochenig1991}. Traditionally, Banach frames are only defined for Banach spaces but we will also use the concept for quasi-Banach spaces.
\begin{Def}[Banach Frame]\label{Def:BanachFrame}
Let $X$ be a (quasi-)Banach space and let $X_d$ be an associated (quasi-)Banach sequence space on $\bb{N}$. A Banach frame for $X$, with respect to $X_d$, is a sequence $\{y_n\}_{n\in \bb{N}}$ in the dual space $X'$, such that
\begin{enumerate}
\item The coefficient operator $C_X:f\rightarrow \{\langle f,y_n\rangle\}_{n\in \bb{N}}$ is bounded from $X$ into $X_d$.\label{Def:BanachFrame1}

\item Norm equivalence:\label{Def:BanachFrame2}
\begin{equation*}
\norm{f}_X\asymp \norm{\{\scalarp{f,y_n}\}_{n\in \bb{N}}}_{X_d},\quad \forall f\in X.
\end{equation*}

\item There exists a bounded operator $R_{X_d}$ from $X_d$ onto $X$, called a reconstruction operator, such that\label{Def:BanachFrame3}
\begin{equation*}
R_{X_d}C_Xf=R_{X_d}\left(\{\scalarp{f,y_n}\}_{n\in \bb{N}}\right)=f, \quad \forall f\in X.
\end{equation*}
\end{enumerate}
\end{Def}
\begin{Rem}
We will actually prove that $\{h_{T,n}^p\}_{T,n}$ forms an atomic decomposition \cite{Casazza1999,FEICHTINGER1989307,Feichtinger1989II} for $D(\mathcal{Q},L^p,\ell^q_{\omega^s})$ as the reconstruction operator takes the form $f=\sum_{T,n}\langle f,h_{T,n}^p\rangle x_{T,n}$ with $\{x_{T,n}\}\subset D(\mathcal{Q},L^p,\ell^q_{\omega^s})$ (see \Theref{The:BanachFrame} below).
\end{Rem}
In order to show that that $\{h_{T,n}^p\}_{T,n}$ forms a Banach frame for $D(\mathcal{Q},L^p,\ell^q_{\omega^s})$, we first note that 
\begin{equation*}
\{h_{T,n}^p\}_{T\in \mathcal{T},n\in \bb{Z}^d}\subset \mathcal{S}(\bb{R}^d) \subset D'(\mathcal{Q},L^p,\ell^q_{\omega^s})
\end{equation*}
as required by \Defref{Def:BanachFrame}. Furthermore, the equivalence in \Theref{The:MainResult} implies that \Defenuref{Def:BanachFrame}{Def:BanachFrame2} is satisfied and the corresponding proof reveals that \Defenuref{Def:BanachFrame}{Def:BanachFrame1} is satisfied. What remains to be shown is the existence of a bounded reconstruction operator such that \Defenuref{Def:BanachFrame}{Def:BanachFrame3} holds. For $\{c_{T,n}\}_{T,n}\in d(\mathcal{Q},\ell^p,\ell^q_{\omega^s})$, we define the reconstruction operator as
\begin{equation}\label{eq:DefReconstructionOperator}
R_{d(\mathcal{Q},\ell^p,\ell^q_{\omega^s})}\left(\left\{c_{T,n}\right\}_{T,n}\right)=\sum_{T\in \mathcal{T},n\in \bb{Z}^d}c_{T,n}\abs{T}^{1/p-1/2}\tilde{h}_{T,n},
\end{equation}
with $\{\tilde{h}_{T,n}\}_{T\in \mathcal{T},n\in \bb{Z}^d}$ being the dual frame given in \eqref{eq:NSGFCanonicalDualFrame} \Pageref{eq:NSGFCanonicalDualFrame}. We now provide the main result of this section.
\begin{The}\label{The:BanachFrame}
Given $s\in \bb{R}$ and $0<p,q<\infty$, $\{h^p_{T,n}\}_{T\in \mathcal{T},n\in \bb{Z}^d}$ forms a Banach frame for $D(\mathcal{Q},L^p,\ell^q_{\omega^s})$. Furthermore, we have the expansions
\begin{equation}\label{eq:AtomicExpansion}
f=\sum_{T\in \mathcal{T},n\in \bb{Z}^d}\scalarp{f,h_{T,n}}\tilde{h}_{T,n},\quad \forall f\in D(\mathcal{Q},L^p,\ell^q_{\omega^s}),
\end{equation}
with unconditional convergence.
\end{The}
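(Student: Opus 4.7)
The plan is to reduce everything to \Theref{The:MainResult} and to the boundedness of the reconstruction operator $R:=R_{d(\mathcal{Q},\ell^p,\ell^q_{\omega^s})}$ defined by \eqref{eq:DefReconstructionOperator}. The inclusion $\{h^p_{T,n}\}\subset \mathcal{S}(\bb{R}^d)\subset D'(\mathcal{Q},L^p,\ell^q_{\omega^s})$ places the atoms in the dual space as required. Items \eqref{Def:BanachFrame1} and \eqref{Def:BanachFrame2} of \Defref{Def:BanachFrame} are then immediate from \Theref{The:MainResult}: the right-hand inequality of the norm equivalence gives the boundedness of the coefficient operator $C_X:f\mapsto \{\scalarp{f,h^p_{T,n}}\}_{T,n}$, while the full equivalence is the desired norm-equivalence statement. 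The real work lies in \Defenuref{Def:BanachFrame}{Def:BanachFrame3}, which simultaneously yields the expansion identity \eqref{eq:AtomicExpansion}.

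I would first establish that $R$ is bounded from $d(\mathcal{Q},\ell^p,\ell^q_{\omega^s})$ into $D(\mathcal{Q},L^p,\ell^q_{\omega^s})$. Working on the dense subspace of finitely supported sequences $\{c_{T,n}\}$ (for which the sum in \eqref{eq:DefReconstructionOperator} is a finite combination of Schwartz functions), I would estimate $\norm{\psi_{T_0}(D)R(\{c_{T,n}\})}_{L^p}$ for each $T_0\in \mathcal{T}$. The support condition $\supp(\hat{\tilde h}_{T,n})\subset Q_T$ collapses the sum to $T\in \widetilde{T_0}$, and the identity $\psi_{T_0}(D)\tilde h_{T,n}=\psi_{T_0}^h(D)h_{T,n}$ (verified directly on the Fourier side) places the problem inside the scope of \Lemref{Lem:uniformlyboundedfouriermult}. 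Combining the pointwise decay established in its proof with the integration argument used to pass from \Proref{Pro:Decay} to \eqref{eq:sup2} yields the uniform bound $\norm{\psi_{T_0}(D)\tilde h_{T,n}}_{L^p}\leq C\abs{T}^{1/2-1/p}$. Applying the $p$-triangle inequality when $p\leq 1$, or the Hölder trick from the proof of \eqref{eq:MainLemma2} when $p>1$, then leads to $\norm{\psi_{T_0}(D)R(\{c_{T,n}\})}_{L^p}\leq C\sum_{T\in \widetilde{T_0}}\norm{\{c_{T,n}\}_n}_{\ell^p}$. Taking $\ell^q_{\omega^s}$-norms in $T_0$ and invoking \eqref{eq:Operatornorm} for $\{a_T\}\mapsto\{a_T^+\}$ delivers $\norm{R(\{c_{T,n}\})}_{D(\mathcal{Q},L^p,\ell^q_{\omega^s})}\leq C\norm{\{c_{T,n}\}}_{d(\mathcal{Q},\ell^p,\ell^q_{\omega^s})}$, and density of finitely supported sequences in $d(\mathcal{Q},\ell^p,\ell^q_{\omega^s})$ (which uses $0<p,q<\infty$) extends $R$ to a bounded operator on the whole space.

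Next I would verify the identity $R\circ C_X=\mathrm{id}$ by density. For $f\in \mathcal{S}(\bb{R}^d)\subset L^2(\bb{R}^d)$, standard Hilbert-space frame theory gives $f=\sum_{T,n}\scalarp{f,h_{T,n}}\tilde h_{T,n}$ with unconditional $L^2$-convergence, and the relation $\scalarp{f,h^p_{T,n}}\abs{T}^{1/p-1/2}=\scalarp{f,h_{T,n}}$ shows that this series equals $RC_Xf$. Its finite partial sums converge to $f$ in $L^2$ and, by the bound just established, to $RC_Xf$ in $D(\mathcal{Q},L^p,\ell^q_{\omega^s})$; both spaces embed continuously in $\mathcal{S}'(\bb{R}^d)$ by \Theenuref{THE:DECOMPOSITIONTHEOREM}{THE:DECOMPOSITIONTHEOREM1}, so the two limits must coincide in $\mathcal{S}'$, forcing $RC_Xf=f$. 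Density of $\mathcal{S}(\bb{R}^d)$ in $D(\mathcal{Q},L^p,\ell^q_{\omega^s})$ (\Theenuref{THE:DECOMPOSITIONTHEOREM}{THE:DECOMPOSITIONTHEOREM3}) together with the continuity of $R$ and $C_X$ then upgrades the identity to arbitrary $f\in D(\mathcal{Q},L^p,\ell^q_{\omega^s})$, simultaneously producing the expansion \eqref{eq:AtomicExpansion} convergent in the $D$-norm.

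Finally, unconditional convergence of \eqref{eq:AtomicExpansion} will follow from the same density argument: for every $\varepsilon>0$ there is a finite set $F\subset \mathcal{T}\times \bb{Z}^d$ such that $\norm{\{\scalarp{f,h^p_{T,n}}\}_{(T,n)\notin F'}}_{d(\mathcal{Q},\ell^p,\ell^q_{\omega^s})}$ is arbitrarily small for every finite $F'\supset F$, and the boundedness of $R$ converts this tail estimate into $\norm{f-\sum_{(T,n)\in F'}\scalarp{f,h_{T,n}}\tilde h_{T,n}}_{D(\mathcal{Q},L^p,\ell^q_{\omega^s})}<\varepsilon$. The main technical obstacle I anticipate is the uniform $L^p$-bound $\norm{\psi_{T_0}(D)\tilde h_{T,n}}_{L^p}\leq C\abs{T}^{1/2-1/p}$: it is not explicitly recorded in \Lemref{Lem:uniformlyboundedfouriermult}, but the pointwise decay its proof produces for $\psi_{T_0}^h(D)h_{T,n}$ is exactly the ingredient needed to repeat the derivation of \eqref{eq:sup2} and obtain the required uniform $L^p$-bound.
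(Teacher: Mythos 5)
Your proposal is correct and takes essentially the same approach as the paper: items \eqref{Def:BanachFrame1}--\eqref{Def:BanachFrame2} of \Defref{Def:BanachFrame} from \Theref{The:MainResult}, boundedness of $R$ via the dual-frame structure and \Lemref{Lem:uniformlyboundedfouriermult}, the identity $RC=\mathrm{id}$ by density through the $L^2$ frame expansion, and unconditional convergence via the tail estimate in the $d(\mathcal{Q},\ell^p,\ell^q_{\omega^s})$-norm. The only organizational difference is that where the paper inserts $\widetilde{\psi}_T$ and factors the estimate through the primal synthesis bound \eqref{eq:ReconstructionOperator2} (citing the arguments of Lemma 4 in \cite{BorupNielsen2007}), you inline the same computation at the level of dual atoms via the identity $\psi_{T_0}(D)\tilde{h}_{T,n}=\psi_{T_0}^h(D)h_{T,n}$ and the decay established in the proof of \Lemref{Lem:uniformlyboundedfouriermult}.
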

\begin{proof}
Let $R$ and $C$ denote the reconstruction- and coefficient operator, respectively. We first prove that $R$ is bounded from $d(\mathcal{Q},\ell^p,\ell^q_{\omega^s})$ onto $D(\mathcal{Q},L^p,\ell^q_{\omega^s})$. For $\{c_{T,n}\}_{T,n}\in d(\mathcal{Q},\ell^p,\ell^q_{\omega^s})$ we let $g:=R(\{c_{T,n}\}_{T,n})$. For $T\in \mathcal{T}$, \Lemref{Lem:uniformlyboundedfouriermult} implies
\begin{align}\label{eq:ReconstructionOperator1}
\norm{\psi_T(D)g}_{L^p}&=\norm{\mathcal{F}^{-1}\left(\frac{\psi_T}{\sum_{l\in \widetilde{T}}\frac{1}{a_l^d}\abs{\hat{h}_l}^2}\cdot\widetilde{\psi}_T\cdot\sum_{T'\in \mathcal{T},n\in \bb{Z}^d}c_{T',n}\abs{T'}^{1/p-1/2}\hat{h}_{T',n}\right)}_{L^p}\notag\\
&\leq C_1\norm{\widetilde{\psi}_T(D)\left(\sum_{T'\in \mathcal{T},n\in \bb{Z}^d}c_{T',n}\abs{T'}^{1/p-1/2}h_{T',n}\right)}_{L^p}.
\end{align}
Repeating the arguments from the proof of \cite[Lemma 4]{BorupNielsen2007} we can show that
\begin{equation}\label{eq:ReconstructionOperator2}
\norm{\sum_{T\in \mathcal{T},n\in \bb{Z}^d}c_{T,n}\abs{T}^{1/p-1/2}h_{T,n}}_{D(\mathcal{Q},L^p,\ell^q_{\omega^s})}\leq C\norm{\{c_{T,n}\}_{T,n}}_{d(\mathcal{Q},\ell^p,\ell^q_{\omega^s})}.
\end{equation}
Applying \eqref{eq:Operatornorm} \Pageref{eq:Operatornorm} to \eqref{eq:ReconstructionOperator1} and then using \eqref{eq:ReconstructionOperator2} we get
\begin{align}\label{eq:ReconstructionOperator3}
\norm{g}_{D(\mathcal{Q},L^p,\ell^q_{\omega^s})}&\leq C_2\norm{\sum_{T\in \mathcal{T},n\in \bb{Z}^d}c_{T,n}\abs{T}^{1/p-1/2}h_{T,n}}_{D(\mathcal{Q},L^p,\ell^q_{\omega^s})}\notag\\
&\leq C_3\norm{\{c_{T,n}\}_{T\in \mathcal{T},n\in \bb{Z}^d}}_{d(\mathcal{Q},\ell^p,\ell^q_{\omega^s})}.
\end{align}
This proves that $R$ is bounded from $d(\mathcal{Q},\ell^p,\ell^q_{\omega^s})$ onto $D(\mathcal{Q},L^p,\ell^q_{\omega^s})$. Let us now show the unconditional convergence of \eqref{eq:AtomicExpansion}.  Given $f\in D(\mathcal{Q},L^p,\ell^q_{\omega^s})$, we can find a sequence $\{f_k\}_{k\geq 1}$, with $f_k\in \mathcal{S}(\bb{R}^d)$ for all $k\geq 1$, such that $f_k\rightarrow f$ in $D(\mathcal{Q},L^p,\ell^q_{\omega^s})$ as $k\rightarrow \infty$. Furthermore, since $\{h_{T,n}\}_{T,n}$ forms a frame for $L^2(\bb{R}^d)$, for each $k\geq 1$ we have the expansion
\begin{equation*}
f_k=\sum_{T\in \mathcal{T},n\in \bb{Z}^d}\scalarp{f_k,h_{T,n}}\tilde{h}_{T,n}=RC(f_k),
\end{equation*}
with unconditional convergence. Since $RC:D(\mathcal{Q},L^p,\ell^q_{\omega^s})\rightarrow D(\mathcal{Q},L^p,\ell^q_{\omega^s})$ is continuous, letting $k\rightarrow \infty$ yields
\begin{equation}\label{eq:BanachFrameProof1}
f=RC(f)=\sum_{T\in \mathcal{T},n\in \bb{Z}^d}\scalarp{f,h_{T,n}}\tilde{h}_{T,n}.
\end{equation}
Given $\varepsilon>0$, \eqref{eq:ReconstructionOperator3} implies that we can find a \emph{finite} subset $F_0\subset \mathcal{T}\times\bb{Z}^d$, such that 
\begin{equation*}
\norm{f-\sum_{(T,n)\in F}\scalarp{f,h_{T,n}}\tilde{h}_{T,n}}_{D(\mathcal{Q},L^p,\ell^q_{\omega^s})}\leq C \norm{\left\{\scalarp{f,h_{T,n}}\right\}_{(T,n)\notin F}}_{d(\mathcal{Q},L^p,\ell^q_{\omega^s})}<\varepsilon,
\end{equation*}
for all finite sets $F\supseteq F_0$. According to \cite[Proposition 5.3.1]{Grochenig2000}, this property is equivalent to unconditional convergence.
\end{proof}
We close this section by discussing the implications of the achieved results. According to \Theref{The:MainResult} and \Theref{The:BanachFrame}, every $f\in D(\mathcal{Q},L^p,\ell^q_{\omega^s})$ has an expansion of the form
\begin{align*}
f&=\sum_{T\in \mathcal{T},n\in \bb{Z}^d}\scalarp{f,h^p_{T,n}}\abs{T}^{1/p-1/2}\tilde{h}_{T,n},\quad\text{with}\\
\norm{f}_{D(\mathcal{Q},L^p,\ell^q_{\omega^s})}&\asymp \norm{\left\{\scalarp{f,h^p_{T,n}}\right\}_{T,n}}_{d(\mathcal{Q},\ell^p,\ell^q_{\omega^s})}.
\end{align*}
Now, assume there exists another set of reconstruction coefficients $\{c_{T,n}\}_{T,n}\in d(\mathcal{Q},\ell^p,\ell^q_{\omega^s})$ which is sparser than $\{\langle f,h^p_{T,n}\rangle\}_{T,n}$ when sparseness is measured by the $d(\mathcal{Q},\ell^p,\ell^q_{\omega^s})$-norm. Since the reconstruction operator $R$ is bounded we get
\begin{align*}
\norm{\{c_{T,n}\}_{T,n}}_{d(\mathcal{Q},\ell^p,\ell^q_{\omega^s})}&\leq \norm{\left\{\scalarp{f,h^p_{T,n}}\right\}_{T,n}}_{d(\mathcal{Q},\ell^p,\ell^q_{\omega^s})}\leq C_1 \norm{f}_{D(\mathcal{Q},L^p,\ell^q_{\omega^s})}\\
&= C_1 \norm{R(\{c_{T,n}\}_{T,n})}_{D(\mathcal{Q},L^p,\ell^q_{\omega^s})}\leq C_2 \norm{\{c_{T,n}\}_{T,n}}_{d(\mathcal{Q},\ell^p,\ell^q_{\omega^s})}.
\end{align*}
We conclude that the canonical coefficients $\{\langle f,h^p_{T,n}\rangle\}_{T,n}$ are (up to a constant) the sparsest possible choice for expanding $f$ as 
\begin{equation*}
f=\sum_{T\in \mathcal{T},n\in \bb{Z}^d}c_{T,n}\abs{T}^{1/p-1/2}\tilde{h}_{T,n},
\end{equation*}
when sparseness of the coefficients is measured by the $d(\mathcal{Q},\ell^p,\ell^q_{\omega^s})$-norm. Furthermore, $f\in D(\mathcal{Q},L^p,\ell^q_{\omega^s})$ if and only if $f$ permits a sparse expansion relative to the dictionary $\{|T|^{1/p-1/2}\widetilde{h}_{T,n}\}_{T,n}$ .
\section{Application to Nonlinear Approximation Theory}\label{Sec:7}
In this section we provide the link to nonlinear approximation theory. An important property of the sparse expansions obtained in \Theref{The:BanachFrame} is that we can obtain a good compression by simply thresholding the coefficients from the expansion. As mentioned in the introduction, NSGF's can create adaptive time-frequency representations as opposed to standard Gabor frames. Such adaptive representations can be constructed to fit the particular nature of a given signal, thereby producing a more precise (and hopefully sparser) time-frequency representation. In particular, NSGF's have proven to be useful in connection with music signals. For instance, in \cite{NuHAGnsgf2011,NuHAGconstantQ2011} the authors use NSGF's to construct an invertible constant-Q transform with good frequency resolution at the lower frequencies and good time resolution at the higher frequencies. Such a time-frequency resolution is often more natural for music signals than the uniform resolution provided by Gabor frames.

The main result of this section is given in \eqref{eq:ApplicationNonlinear2} below. The corresponding proof follows directly from the results obtained in Sections \ref{Sec:5} and \ref{Sec:6} together with standard arguments from nonlinear approximation theory \cite{GribonvalNielsen2004}. Let $f\in D(\mathcal{Q},L^\tau,\ell^\tau_{\omega^s})$, with $0<\tau<\infty$, and let $0<p<\infty$ satisfy $\alpha:=1/\tau-1/p>0$. Write the frame expansion of $f$ with respect to the $L^p-$normalized coefficients
\begin{equation}\label{eq:ApplicationNonlinear1}
f=\sum_{T\in \mathcal{T},n\in \bb{Z}^d}\scalarp{f,h^p_{T,n}}\abs{T}^{1/p-1/2}\tilde{h}_{T,n}.
\end{equation}
Let $\{\theta_m\}_{m\in \bb{N}}$ be a decreasing rearrangement of the frame coefficients and let $f_N$ be the $N$-term approximation to $f$ obtained by extracting the coefficients in \eqref{eq:ApplicationNonlinear1} corresponding to the $N$ largest coefficients $\{\theta_m\}_{m=1}^N$. Then, we can prove the existence of $C>0$ such that for $f\in D(\mathcal{Q},L^\tau,\ell^\tau_{\omega^s})$ and $N\in \bb{N}$, 
\begin{equation}\label{eq:ApplicationNonlinear2}
\norm{f-f_N}_{D(\mathcal{Q},L^p,\ell^p_{\omega^s})}\leq C N^{-\alpha}\norm{f}_{D(\mathcal{Q},L^\tau,\ell^\tau_{\omega^s})}.
\end{equation}
In other words, for $f\in D(\mathcal{Q},L^\tau,\ell^\tau_{\omega^s})$ we can obtain good approximations in $D(\mathcal{Q},L^p,\ell^p_{\omega^s})$ by thresholding the $L^p-$normalized frame coefficients. We note that for $0<\tau<2$ we obtain good approximations in $L^2(\bb{R}^d)$ with respect to the original coefficients $\{\langle f,h_{T,n}\rangle\}_{T,n}$. 

We now explain the obtained results in the general framework of Jackson- and Bernstein inequalities \cite{Devore1993}. Let $\mathcal{D}$ denote the dictionary $\{|T|^{1/p-1/2}\widetilde{h}_{T,n}\}_{T,n}$ and define the nonlinear set of all linear combinations of at most $N$ elements from $\mathcal{D}$ as
\begin{equation*}
\Sigma_N(\mathcal{D}):=\left\{\sum_{T,n\in \Delta}c_{T,n}\abs{T}^{1/p-1/2}\tilde{h}_{T,n}~\Bigg|~\#\Delta\leq N\right\}.
\end{equation*}
For any $f\in D(\mathcal{Q},L^p,\ell^p_{\omega^s})$, the error of best $N$-term approximation to $f$ is 
\begin{equation*}
\sigma_N\left(f,\mathcal{D}\right):=\inf_{h\in \Sigma_N(\mathcal{D})}\norm{f-h}_{D(\mathcal{Q},L^p,\ell^p_{\omega^s})}.
\end{equation*}
Since $f_N\in \Sigma_N(\mathcal{D})$, \eqref{eq:ApplicationNonlinear2} yields 
\begin{equation*}
\sigma_N\left(f,\mathcal{D}\right)\leq CN^{-\alpha}\norm{f}_{D(\mathcal{Q},L^\tau,\ell^\tau_{\omega^s})}.
\end{equation*}
This is a so-called Jackson inequality for nonlinear $N$-term approximation with $\mathcal{D}$. It provides us with an upper bound for the error obtained by approximating $f$ with the best possible choice of linear combinations of at most $N$ elements from the dictionary. The converse inequality is called a Bernstein inequality and is in general much more difficult to obtain for redundant systems \cite{NielsenGribonval2006}. The existence of a Bernstein inequality would provide us with a lower bound and hence a full characterization of the error of best $N$-term approximation to $f$ with respect to the dictionary $\mathcal{D}$. However, for this particular system (and for many other redundant systems), the existence of a Bernstein inequality is still an open question.
\section*{Acknowledgements}
We thank the two anonymous reviewers for their constructive comments on the original manuscript. Their valuable suggestions have helped improve the manuscript considerably.

\appendix
\section{Proof of \Theref{THE:DECOMPOSITIONTHEOREM}}\label{App:1}
\begin{proof}
To simplify notation we let $D^s_{p,q}:=D(\mathcal{Q},L^p,\ell^q_{\omega^s})$. Let us first prove \Theenuref{THE:DECOMPOSITIONTHEOREM}{THE:DECOMPOSITIONTHEOREM1}. Allowing the extension $q=\infty$, and repeating the arguments from the proof of \cite[Proposition 5.7]{BorupNielsen2008}, we can show that
\begin{equation*}
D^{s+\varepsilon}_{p,\infty}\hookrightarrow D^s_{p,q}\hookrightarrow D^s_{p,\infty},\quad \varepsilon>d/q,
\end{equation*}
for any $s\in \bb{R}$ and $0<p<\infty$ using \Defref{Def:SAC}\eqref{Def:SAC4}. It therefore suffice to show that $\mathcal{S}(\bb{R}^d)\hookrightarrow D^s_{p,\infty} \hookrightarrow \mathcal{S}'(\bb{R}^d)$ for any $s\in \bb{R}$ and $0<p<\infty$. For $N\in \bb{N}$, we define semi-norms on $\mathcal{S}(\bb{R}^d)$ by
\begin{equation*}
p_N(g):=\sup_{\xi \in \bb{R}^d}\left\{u(\xi)^N\sum_{\abs{\beta}\leq N}\abs{\partial^\beta \hat{g}(\xi)}\right\},\quad g\in \mathcal{S}(\bb{R}^d),
\end{equation*}
with $u(\xi)=1+\|\xi\|_2$ as usual. Following the approach in \cite[Page 149]{BorupNielsen2008}, and applying \Proenuref{PRO:BAPU}{PRO:BAPU4}, we get
\begin{equation*}
\norm{f}_{D^s_{p,\infty}}\leq Cp_N(f)\quad \text{and}\quad \norm{f}_{D^s_{1,1}}\leq C'p_{N'}(f),
\end{equation*}
for sufficiently large $N$ and $N'$. This proves that $\mathcal{S}(\bb{R}^d)\hookrightarrow D^s_{p,\infty}$ and $\mathcal{S}(\bb{R}^d)\hookrightarrow D^s_{1,1}$. To show that $D^s_{p,\infty} \hookrightarrow \mathcal{S}'(\bb{R}^d)$ we need to take a different approach than in \cite{BorupNielsen2008}. Setting $\widetilde{\psi}_T:=\sum_{T'\in \widetilde{T}}\psi_{T'}$, we first note that for $f\in D^s_{p,\infty}$ and $\varphi\in \mathcal{S}(\bb{R}^d)$, 
\begin{equation*}
\abs{\scalarp{f,\varphi}}\leq \sum_{T\in \mathcal{T}}\norm{\psi_T(D)f\widetilde{\psi}_T(D)\varphi}_{L^1}\leq \sum_{T\in \mathcal{T}}\norm{\psi_T(D)f}_{L^\infty}\norm{\widetilde{\psi}_T(D)\varphi}_{L^1}.
\end{equation*}
Using \Lemref{Lem:TechnicalBelow} below (with $g=\mathcal{F}^{-1}\{\psi_T\hat{f}(T\xi)\}$) we thus get
\begin{align}\label{eq:appendixproof}
\abs{\scalarp{f,\varphi}}&\leq C_1 \sum_{T\in \mathcal{T}}\abs{T}^{1/p}\norm{\psi_T(D)f}_{L^p}\norm{\widetilde{\psi}_T(D)\varphi}_{L^1} \notag\\
&\leq C_1\norm{f}_{D^s_{p,\infty}}\sum_{T\in \mathcal{T}}\abs{T}^{1/p}\omega_T^{-s}\norm{\widetilde{\psi}_T(D)\varphi}_{L^1}\notag\\
&\leq C_2\norm{f}_{D^s_{p,\infty}}\norm{\left\{\norm{\widetilde{\psi}_T(D)\varphi}_{L^1}\right\}_{T\in \mathcal{T}}}_{\ell^1_{\omega^{\gamma/p-s}}},
\end{align}
since $|T|=|Q|^{-1}|Q_T|\leq |Q|^{-1}\omega_T^\gamma$ according to \Defref{Def:SAC}\eqref{Def:SAC5}. Applying \eqref{eq:Operatornorm} \Pageref{eq:Operatornorm} we may continue on \eqref{eq:appendixproof} and write
\begin{equation*}
\abs{\scalarp{f,\varphi}}\leq C_3\norm{f}_{D^s_{p,\infty}}\norm{\varphi}_{D^{\gamma/p-s}_{1,1}}\leq C_4\norm{f}_{D^s_{p,\infty}}p_N(\varphi),
\end{equation*}
for sufficiently large $N$ since $\mathcal{S}(\bb{R}^d)\hookrightarrow D^s_{1,1}$. We conclude that $D^s_{p,\infty}\hookrightarrow\mathcal{S}'(\bb{R}^d)$ which proves \Theenuref{THE:DECOMPOSITIONTHEOREM}{THE:DECOMPOSITIONTHEOREM1}.

The proof of \Theenuref{THE:DECOMPOSITIONTHEOREM}{THE:DECOMPOSITIONTHEOREM2} follows directly from \Theenuref{THE:DECOMPOSITIONTHEOREM}{THE:DECOMPOSITIONTHEOREM1} and the arguments in \cite[Page 150]{BorupNielsen2008}. 

To prove \Theenuref{THE:DECOMPOSITIONTHEOREM}{THE:DECOMPOSITIONTHEOREM3} we let $f\in D^s_{p,q}$ and choose $I\in C^\infty_c(\bb{R}^d)$ with $0\leq I(\xi)\leq 1$ and $I(\xi)\equiv 1$ in a neighbourhood of $\xi=0$. Also, we define $(\widetilde{f})^{\widehat{}}:=I\hat{f}$ and
\begin{equation*}
\widetilde{f}_\varepsilon:=\mathcal{F}^{-1}\left\{\varphi_\varepsilon\ast \left(\widetilde{f}~\right)^{\widehat{}}~\right\}\in \mathcal{S}(\bb{R}^d),
\end{equation*}
with $\varphi_\varepsilon(\xi):=\varepsilon^{-d}\varphi(\xi/\varepsilon)$ and $\varphi$ being a compactly supported mollifier. Since $\supp(I)$ is compact, we may choose a \emph{finite} subset $T^*\subset \mathcal{T}$, such that $\supp(I)\subset \cup_{T\in T^*}Q_T$ and $\sum_{T\in T^*}\psi_T(\xi)\equiv 1$ on $\supp(I)$. Using \Lemref{Lem:BOproof2} below we obtain
\begin{align*}
\|\widetilde{f}\|_{L^p}&=\norm{\mathcal{F}^{-1}I\mathcal{F}\left(\mathcal{F}^{-1}\left(\sum_{T\in T^*}\psi_T\cdot \hat{f}\right)\right)}_{L^p}\\
&\leq C\sum_{T\in T^*}\norm{\mathcal{F}^{-1}I}_{L^{\tilde{p}}}\norm{\psi_T(D)f}_{L^p}<\infty,
\end{align*}
with $\tilde{p}=\min\{1,p\}$. The dominated convergence theorem thus yields
\begin{equation*}
\norm{\widetilde{f}-\widetilde{f}_\varepsilon}_{D^s_{p,q}}\leq C\norm{\left\{\norm{\widetilde{f}-\widetilde{f}_\varepsilon}_{L^p}\right\}_{T\in \mathcal{T}}}_{\ell^q_{\omega^s}}\rightarrow 0,\quad \text{as }\varepsilon\rightarrow 0,
\end{equation*}
so the proof is done if we can show that $\|f-\widetilde{f}\|_{D^s_{p,q}}$ can be made arbitrary small by choosing $\widetilde{f}$ appropriately. To show this, we define the set $T_{\circ}:=\{T\in \mathcal{T}~|~I(\xi)\equiv 1 \text{ on }\supp(\psi_T)\}$. Denoting the complement $T_{\circ}^c$, \Lemref{Lem:BOproof2} below yields
\begin{align*}
\norm{f-\widetilde{f}}_{D^s_{p,q}}^q&=\sum_{T\in T_{\circ}^c}\omega_T^{sq}\norm{\mathcal{F}^{-1}\left(\psi_T\left(\hat{f}-I\hat{f}\right)\right)}_{L^p}^q\\
&\leq C_1\sum_{T\in T_{\circ}^c}\omega_T^{sq}\left(\norm{\psi_T(D)f}_{L^p}+\norm{\mathcal{F}^{-1}I\mathcal{F}\left(\psi_T(D)f\right)}_{L^p}\right)^q\\
&\leq C_2\sum_{T\in T_{\circ}^c}\omega_T^{sq}\norm{\psi_T(D)f}_{L^p}^q.
\end{align*}
Finally, since $f\in D^s_{p,q}$ we can choose $\supp(I)$ large enough, such that $\|f-\widetilde{f}\|_{D^s_{p,q}}<\varepsilon$, for any given $\varepsilon>0$. This proves \Theenuref{THE:DECOMPOSITIONTHEOREM}{THE:DECOMPOSITIONTHEOREM3}.
\end{proof}
In the proof of \Theref{THE:DECOMPOSITIONTHEOREM} we used the following two lemmas. A proof of \Lemref{Lem:TechnicalBelow} can be found in \cite[Lemma 3]{BorupNielsen2007} and a proof of \Lemref{Lem:BOproof2} can be found in \cite[Proposition 1.5.1]{Triebel2010}.
\begin{Lem}\label{Lem:TechnicalBelow}
Let $g\in L^p(\bb{R}^d)$ and $\supp(\hat{g})\subset\Gamma$, with $\Gamma\subset \bb{R}^d$ compact. Given an invertible affine transformation $T$, let $\hat{g}_T(\xi):=\hat{g}(T^{-1}\xi)$. Then for $0<p\leq q\leq \infty$,
\begin{equation*}
\norm{g_T}_{L_q}\leq C\abs{T}^{1/p-1/q}\norm{g_T}_{L_p},
\end{equation*}
for a constant $C$ independent of $T$.
\end{Lem}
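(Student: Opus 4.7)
The plan is to reduce the inequality for the affinely dilated function $g_T$ to the standard Plancherel--Polya (a.k.a.\ Nikol'ski\u{\i}) inequality for functions band-limited to the \emph{fixed} compact set $\Gamma$, using the rescaling that is built into the definition of $g_T$.

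First I would write $T\xi = A\xi + c$ and use Fourier inversion together with the substitution $\xi = Au + c$ (which has Jacobian $|T| = |\det A|$) to derive an explicit formula relating $g_T$ and $g$, namely
\begin{equation*}
g_T(x) \;=\; |T|\,e^{2\pi i c\cdot x}\, g(A^{\mathsf T}x), \qquad x\in\bb{R}^d.
\end{equation*}
From this identity, a straightforward change of variables in the Lebesgue integral yields the scaling law
\begin{equation*}
\norm{g_T}_{L^r} \;=\; |T|^{1-1/r}\,\norm{g}_{L^r}, \qquad 0<r\leq\infty,
\end{equation*}
which is valid for every $r$, with the usual convention at $r=\infty$.

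Second, since $\supp(\hat g)\subset\Gamma$ is a \emph{fixed} compact set, independent of $T$, the classical Plancherel--Polya inequality applies to $g$: for $0<p\leq q\leq\infty$ there exists $C_\Gamma$, depending only on $\Gamma$, such that $\norm{g}_{L^q}\leq C_\Gamma\,\norm{g}_{L^p}$. (One standard route is to pick $\varphi\in\mathcal{S}(\bb{R}^d)$ with $\hat\varphi\equiv 1$ on $\Gamma$, write $g=\varphi\ast g$, and estimate via Young-type convolution arguments available in the quasi-Banach range $0<p<1$ as well.) Chaining the two scaling identities gives
\begin{equation*}
\norm{g_T}_{L^q} \;=\; |T|^{1-1/q}\norm{g}_{L^q}\;\leq\; C_\Gamma\,|T|^{1-1/q}\norm{g}_{L^p} \;=\; C_\Gamma\,|T|^{1/p-1/q}\norm{g_T}_{L^p},
\end{equation*}
which is the claimed inequality, with $C=C_\Gamma$ independent of $T$.

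The only real obstacle is quoting the Plancherel--Polya inequality in the full range $0<p\leq q\leq\infty$, including $p<1$; once that is granted the proof is purely bookkeeping of the affine scaling. Everything $T$-dependent is absorbed into the factor $|T|^{1/p-1/q}$, and the compactness of $\Gamma$ (independent of $T$) is what makes the constant $C$ uniform in $T$.
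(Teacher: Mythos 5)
Your proposal is correct and coincides with the argument behind the paper's treatment of this lemma: the paper offers no proof of its own, deferring to \cite[Lemma 3]{BorupNielsen2007}, which rests on precisely your two ingredients --- the scaling identity $g_T(x)=\abs{T}e^{2\pi i c\cdot x}g(A^{\mathsf{T}}x)$, which absorbs all $T$-dependence into the factor $\abs{T}^{1/p-1/q}$, and the Plancherel--Polya--Nikol'ski\u{\i} inequality for the fixed compact spectrum $\Gamma$, valid in the full range $0<p\leq q\leq\infty$ (see, e.g., \cite[Section 1.3.2]{Triebel2010}). The only point requiring care is the one you already flagged (quoting Nikol'ski\u{\i} in the quasi-Banach range $p<1$, where for distributional $\hat{g}$ the inversion formula should be read via Paley--Wiener--Schwartz), and that is standard.
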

\begin{Lem}\label{Lem:BOproof2}
Let $\Omega$ and $\Gamma$ be compact subsets of $\bb{R}^d$. Let $0<p\leq \infty$ and $\tilde{p}=\min\{1,p\}$. Then there exists a constant $C$ such that
\begin{equation*}
\norm{\mathcal{F}^{-1}M\mathcal{F}f}_{L^p}\leq C \norm{\mathcal{F}^{-1}M}_{L^{\tilde{p}}}\norm{f}_{L^p}
\end{equation*}
for all $f\in L^p(\bb{R}^d)$ with $\supp(\hat{f})\subset \Omega$ and all $\mathcal{F}^{-1}M\in L^{\tilde{p}}(\bb{R}^d)$ with $\supp(M)\subset \Gamma$.
\end{Lem}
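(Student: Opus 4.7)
The strategy is to view $\mathcal{F}^{-1}M\mathcal{F}f$ as the convolution $(\mathcal{F}^{-1}M)*f$ and to prove the estimate in two regimes separated by $p=1$, with the case $0<p<1$ requiring genuine use of the band-limited hypotheses $\supp(\hat f)\subset\Omega$ and $\supp(M)\subset\Gamma$. A preliminary step common to both cases is to justify the identity $\mathcal{F}^{-1}M\mathcal{F}f=(\mathcal{F}^{-1}M)*f$: since $M$ is a distribution of compact support, $\mathcal{F}^{-1}M$ is a smooth function of at most polynomial growth, and compactness of $\supp(\hat f)$ ensures that the multiplication $M\hat f$ and the convolution $(\mathcal{F}^{-1}M)*f$ are both well defined and agree as tempered distributions.

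For $1\leq p\leq \infty$ one has $\tilde p=1$, and Young's convolution inequality gives the sharp bound $\|(\mathcal{F}^{-1}M)*f\|_{L^p}\leq \|\mathcal{F}^{-1}M\|_{L^1}\|f\|_{L^p}$ with constant $C=1$, settling this regime.

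For $0<p<1$ we have $\tilde p=p$, and the inequality $\|g*f\|_{L^p}\leq \|g\|_{L^p}\|f\|_{L^p}$ is simply false in general; the proof must exploit that $(\mathcal{F}^{-1}M)*f$ is band-limited with spectrum in the compact set $\Omega+\Gamma$. The standard route, followed in Triebel, is to first prove a Plancherel--Polya/Nikol'ski\u{i} maximal-function bound: for any $h\in L^p$ with $\supp(\hat h)$ contained in a fixed compact set $K$ there is a constant $C_{K,p}$ such that
\begin{equation*}
|h(x)|\leq C_{K,p}\bigl(M_{\mathrm{HL}}(|h|^p)(x)\bigr)^{1/p},\qquad x\in\mathbb{R}^d,
\end{equation*}
where $M_{\mathrm{HL}}$ is the Hardy--Littlewood maximal operator. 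Applying this to a rescaled version of the integrand of the convolution and using the elementary $p$-subadditivity $(a+b)^p\leq a^p+b^p$ separates the two convolution factors; integrating over $x$ and invoking $L^{r}$-boundedness of $M_{\mathrm{HL}}$ for some auxiliary $r>1$ produces the desired estimate with $\|\mathcal{F}^{-1}M\|_{L^p}$ on the right-hand side.

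The main obstacle is exactly the transition from the quasi-Banach regime $p<1$ into a setting where a convolution estimate is available, and this transition is precisely what the band-limited hypothesis enables through the maximal-function bound above; in the Banach regime no such extra input is needed. Since the result is a classical tool from the theory of Besov and Triebel--Lizorkin spaces, I would conclude the proof by citing \cite[Proposition~1.5.1]{Triebel2010} for the detailed maximal-function computation rather than redoing it in full.
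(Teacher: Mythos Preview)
Your proposal is correct and matches the paper's treatment: the paper does not give its own proof of this lemma but simply cites \cite[Proposition~1.5.1]{Triebel2010}, exactly as you do in your final sentence. Your sketch of the two regimes (Young's inequality for $p\geq 1$, Plancherel--Polya/maximal-function control of band-limited functions for $0<p<1$) is the standard argument behind that proposition, so you have in fact supplied more detail than the paper itself.
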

\bibliographystyle{abbrv}

\end{document}